\documentclass[colorinlistoftodos,12pt]{article}

\usepackage[utf8]{inputenc}

\usepackage[utf8]{inputenc}
\usepackage[a4paper]{geometry}
\usepackage{amsmath,amsfonts,amsthm,epsfig,latexsym,graphicx,amssymb,xfrac,faktor}
\usepackage{enumerate} 
\usepackage[english]{babel}

\usepackage{mathtools}
\usepackage{stmaryrd} 

\usepackage[dvipsnames]{xcolor} 
\usepackage{tikz-cd}

\usepackage[
  colorlinks = true,  
  linkcolor = RoyalBlue, 
  citecolor = RoyalBlue 
  ]{hyperref} 

\usepackage{sectsty} 
\sectionfont{\centering\color{RoyalBlue}} 
\subsectionfont{\centering\color{RoyalBlue}} 

\DeclareTextFontCommand{\emph}{\color{RoyalBlue}\em} 

\newcommand{\RR}{\mathbb{R}}
\newcommand{\NN}{\mathbb{N}}

\newcommand{\ZZ}{\mathbb{Z}}

\newcommand{\TT}{\mathbb{T}}

\newcommand{\Class}{\mathcal{C}}

\newcommand{\algcap}{\mathrlap{\hspace{2.3pt}\cdot}{\cap}}

\makeatletter
  \def\@tvsp{\mathchoice{{}\mkern-4.5mu}{{}\mkern-4.5mu}{{}\mkern-2.5mu}{}}
  \def\ltrivert{\left|\@tvsp\left|\@tvsp\left|}
  \def\rtrivert{\right|\@tvsp\right|\@tvsp\right|}

  \def\ldrivert{\left|\@tvsp\left|}
  \def\rdrivert{\right|\@tvsp\right|}
\makeatother

\makeatletter
\newcommand{\et}[1][\epsilon]{\def\@psodefault{#1}{\@et}}
\newcommand{\@et}[1][T]{$(\@psodefault,#1)$} 

\newcommand{\psos}[1][\epsilon]{
\def\@psodefault{#1}
{\@pso}s
}
\newcommand{\pso}[1][\epsilon]{
\def\@psodefault{#1}
\@pso
}
\newcommand{\@pso}[1][T]{$(\@psodefault,#1)$-pseudo-orbit} 
\makeatother

\newcommand{\Rec}{\mathcal{R}}   

\newcommand{\bfrac}[2]{{
  \raisebox{0.1em}{\scalebox{0.85}{$#1$}}/
  \raisebox{-0.1em}{\scalebox{0.85}{$#2$}}
}}

\DeclareMathOperator{\tor}{tors}

\DeclareMathOperator{\im}{im}
\DeclareMathOperator{\len}{len}

\DeclareMathOperator{\cone}{cone}

\DeclareMathOperator{\conv}{Conv}

\newtheoremstyle{colorplain}%
{\topsep}   
{\topsep}   
{\itshape}  
{0pt}       
{} 
{.}         
{5pt plus 1pt minus 1pt} 
{\textbf{\textcolor{RoyalBlue}{\textbf{\thmname{#1} \thmnumber{#2}}}}\thmnote{ (#3)}}
{}

\newtheoremstyle{colorremark}%
{\topsep}   
{\topsep}   
{}  
{0pt}       
{\itshape} 
{.}         
{5pt plus 1pt minus 1pt} 
{\textcolor{RoyalBlue}{\thmname{#1} \thmnumber{#2}}\thmnote{ (#3)}}
{}  

\newtheoremstyle{colorremarkbis}%
{\topsep}   
{\topsep}   
{}  
{0pt}       
{\itshape} 
{.}         
{5pt plus 1pt minus 1pt} 
{\textcolor{RoyalBlue}{\thmname{#1}\thmnumber{#2}}\thmnote{(#3)}}
{}

\newtheoremstyle{colordefinition}%
{\topsep}   
{\topsep}   
{}  
{0pt}       
{} 
{.}         
{5pt plus 1pt minus 1pt} 
{\textcolor{RoyalBlue}{\textbf{\thmname{#1} \thmnumber{#2}}}\thmnote{ (#3)}}
{}

\theoremstyle{colorplain}
\newtheorem{theorem}{Theorem}

\numberwithin{theorem}{section}

\newtheorem{maintheorem}{Theorem}

\newtheorem{maincorollary}[maintheorem]{Corollary}

\newtheorem{lemma}[theorem]{Lemma}

\theoremstyle{colorremarkbis}

\newtheorem*{remark*}{Remark}

\theoremstyle{colorremark}

\newtheorem{mainremark}[maintheorem]{Remark}

\theoremstyle{colordefinition}

\makeatletter 
\newenvironment{proofabstract}[1][\proofname]{
    \par
    \pushQED{\qed}%
    \normalfont \topsep6\p@\@plus6\p@\relax
    \trivlist
    \item\relax
    {\itshape
    #1\@addpunct{.}}\hspace\labelsep\ignorespaces
}{%
    \popQED\endtrivlist\@endpefalse
}

\renewenvironment{proof}[1][Proof]{
    \setcounter{claim}{0}    
    \setcounter{claimproof}{0}    
    \par
    \pushQED{\qed}%
    \normalfont\topsep6\p@\@plus6\p@\relax
    \trivlist
    \item\relax
    {\itshape\color{RoyalPurple}#1\@addpunct{.}}\hspace\labelsep\ignorespaces
}{%
    \popQED\endtrivlist\@endpefalse
}

\makeatother

\newcounter{claimproof} 

  {\begin{proofabstract}[Sketch proof]}
  {\end{proofabstract}}

\title{Partial section III: for Anosov flows}
\author{Théo Marty}
\date{}

\begin{document}

\maketitle

\begin{abstract}
	In the previous papers in the series, we characterized partial cross-sections for general flows, in the spirit of Fried's work on global cross-sections. In this paper, we deduce several consequences for Anosov flows. 
	
	We provide a homology criterion for the existence of a partial cross-section in a given cohomology class.
	Additionally, there are at most finitely many partial cross-sections in that cohomology class. 
	We deduce that on a 3-dimensional hyperbolic manifold, any Anosov flow is homologically full.
\end{abstract}

In dimension 3, transverse tori play a central role in the understanding of Anosov flows. For non-transitive systems, Smale~\cite{smale1967differentiable} partitioned the ambient manifold of a hyperbolic flow into finitely many minimal pieces, separated by transverse surfaces. Brunella~\cite{brunella1993separating} reinforced the decomposition for non-transitive Anosov flows, by partitioning the manifold into minimal pieces separated by incompressible transverse tori. Béguin-Bonatti-Yu~\cite{beguin2016spectral} gave a finer partition of the ambient manifold, by adding non-null-cohomologous transverse tori. Then, conversely, Béguin-Bonatti-Yu~\cite{beguin2017building} gave a quite general construction of Anosov flows by gluing minimal pieces along their transverse boundaries.

For transitive Anosov flows, Anosov suspensions are the typical examples of transitive Anosov flows with a transverse torus.
Bonatti-Langevin~\cite{bonatti1994exemple} built an example of a transitive Anosov flow that admits a transverse torus but that is not a suspension. The construction was later generalized by Barbot~\cite{barbot1998generalizations}. 
In both cases, Fenley~\cite{fenley1994anosov} characterized the stable/unstable foliations on transverse tori.

In parallel, Fried~\cite{Fried82} described the set of global cross-sections of a general flow. Very few Anosov flows admit a global cross-section, so Fried's arguments are of little help in providing meaningful information for a general Anosov flow. In a recent work~\cite{marty2025partialI,marty2025partialII}, we gave a description of the set of \emph{partial cross-sections} of a general flow (a transverse closed surface). These sections are much more abundant on Anosov flows. And in particular, they correspond to the transverse tori added by Béguin-Bonatti-Yu. 

In the present paper, we give three consequences for Anosov flows. In the process, we give arguments to detect the tori described in~\cite{beguin2016spectral}. The argument also holds in higher dimensions, where very little is known about transverse cross-sections.

\paragraph{Results.} We consider an Anosov flow $\phi$ on a closed manifold $M$, first in any dimension. We later restrict our attention to dimension 3. Fried \cite{Fried82} introduced a set $D_\phi$ of asymptotic directions. Note that for Anosov flows, the convex hull $\conv(D_\phi)$ is spanned by the set of $\frac{1}{\len(\gamma)}[\gamma]$ where $\gamma$ runs over the periodic orbits of $\phi$.

\begin{maintheorem}\label{thm-cross-section}
	Let $\phi$ be an Anosov flow (in any dimension) and $\alpha$ in $H^1(M,\ZZ)$ be non-zero. There exists a partial cross-section cohomologous to $\alpha$ if and only if $\alpha(D_\phi)\geq 0$ holds. 
	
	Additionally, if $\phi$ is transitive, then there are at most finitely many partial cross-sections cohomologous to $\alpha$, up to isotopy along the flow.
\end{maintheorem}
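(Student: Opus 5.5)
Theorem~\ref{thm-cross-section} will follow from the characterization of partial cross-sections in the earlier papers of the series~\cite{marty2025partialI,marty2025partialII}, specialised using the hyperbolic structure of $\phi$. I recall that criterion in the Fried-type form I expect it to take: a class $\alpha$ is represented by a partial cross-section if and only if $\alpha$ is non-negative on the asymptotic directions $D_\phi$, and $\alpha$ vanishes only on directions carried by a part of the flow that can be pushed off the section. The work is therefore to check, for Anosov flows, that the second condition is automatic once $\alpha(D_\phi)\geq 0$.

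\textbf{Necessity.} Suppose $S$ is a partial cross-section cohomologous to $\alpha$. Every orbit segment meets $S$ only positively, so $\alpha([\gamma])\geq 0$ for each periodic orbit $\gamma$. By the remark preceding the theorem, $\conv(D_\phi)$ is spanned by the normalised classes $\frac{1}{\len(\gamma)}[\gamma]$. Hence $\alpha(D_\phi)\geq 0$. This step is routine.

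\textbf{Sufficiency.} Assume $\alpha(D_\phi)\geq 0$. Let $K$ be the set of points whose orbit has asymptotic direction in $\ker\alpha$; more precisely, the union of the supports of invariant measures $\mu$ with $\alpha(\rho(\mu))=0$. I will argue that $K$ is a compact invariant set, and that the hypothesis of the earlier criterion reduces to a cohomological condition on $K$. For Anosov flows I will use Anosov closing and shadowing, together with the specification property on basic sets. These give that any invariant measure with $\alpha$-zero homology direction is approximated by periodic orbits with $\alpha([\gamma])/\len(\gamma)\to 0$. Together with $\alpha\geq 0$ on periodic orbits, a Livšic-type argument should then show two things. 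First, $\alpha$ restricted to $K$ is represented by a closed $1$-form whose integral along the flow is a coboundary. Second, $K$ is a union of basic pieces, or a hyperbolic subset, on which $\alpha$ is ``flow-trivial''. This is exactly the input needed to build a closed $1$-form $\omega$ in the class $\alpha$ with $\omega(X)\geq 0$ everywhere and $\omega(X)>0$ off a neighbourhood of $K$. A level set of the associated circle-valued map, perturbed away from $K$, then gives the partial cross-section. I expect this step to be the main obstacle: passing from ``$\alpha\geq0$ on $D_\phi$'' to a non-negative representative requires a Schwartzman/Sullivan-type Hahn--Banach argument on the cone of foliation cycles. The earlier papers presumably provide that argument, and the hyperbolicity is needed to exclude degenerate zero-directions that are not carried by a flow-isolated set.

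\textbf{Finiteness (transitive case).} Two partial cross-sections in the same class $\alpha$ differ by the intersection pattern they define on the periodic orbits. A section $S$ is determined up to isotopy along the flow by the invariant set $\Lambda_S$ of orbits that avoid $S$, plus its position in the complement. For transitive $\phi$, $\Lambda_S$ is a proper compact invariant set whose complement is filtrated by $S$. Such sets are forced to be attractor/repeller type pieces of a Smale-type filtration relative to $S$, i.e.\ unions of hyperbolic sets. I will show that there are only finitely many candidates for $\Lambda_S$, using that they must be bounded by the finitely many isolating neighbourhoods associated with $\ker\alpha\cap \conv(D_\phi)$. Then, for a fixed $\Lambda_S$, the earlier uniqueness result (sections with the same complementary dynamics are isotopic along the flow, as in Fried's uniqueness) gives finiteness.
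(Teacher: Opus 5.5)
Your necessity argument is fine, but the sufficiency and finiteness parts have genuine gaps. For sufficiency, the decisive step is deferred to what ``the earlier papers presumably provide'': producing a closed $1$-form $\omega$ in the class $\alpha$ with $\omega(X)\geq 0$ whose level set can be pushed off $K$. That step is the whole difficulty. A level set is not transverse where $\omega(X)=0$, so you would need $\alpha$ to be trivial on a neighbourhood of $K$. Your sketch does not give this: you never show $K$ is an isolated hyperbolic set. Also, approximating by periodic orbits with $\alpha([\gamma])/\len(\gamma)\to 0$ does not give the exact vanishing $\alpha([\gamma])=0$ that a Liv\v{s}ic argument needs. More importantly, the criterion you ``recall'' is not the available one. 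Theorem~\ref{thm-ps-existence} is phrased with pseudo-orbits: a partial cross-section in the class $\alpha$ exists if and only if $\alpha(D_{\phi,\epsilon})\geq 0$ for some small $\epsilon>0$. Unlike Fried's strict inequality, a non-strict inequality on $D_\phi=\bigcap_\epsilon D_{\phi,\epsilon}$ does not pass to some $D_{\phi,\epsilon}$ by compactness, and this is exactly where hyperbolicity enters. The shadowing lemma settles it at once. For $\epsilon$ small, every periodic $\epsilon$-pseudo-orbit $\gamma$ is shadowed by, hence homologous to, a genuine periodic orbit $\delta$, so $\alpha([\gamma])=\alpha([\delta])\geq 0$. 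Thus $\cone(D_{\phi,\epsilon})=\cone(D_\phi)$, and $\alpha(D_\phi)\geq 0$ gives $\alpha(D_{\phi,\epsilon})\geq 0$. No Liv\v{s}ic, specification or Hahn--Banach argument is needed.

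For finiteness, the principle you invoke is false as stated: that cohomologous sections avoided by the same invariant set $\Lambda_S$ are isotopic along the flow. Take $T$ a connected separating partial cross-section of a non-transitive Anosov flow (e.g.\ one of Brunella's tori). Every orbit crosses $T$ at most once, so for distinct small times $t_1,\dots,t_k$ the union $\phi_{t_1}(T)\cup\dots\cup\phi_{t_k}(T)$ is a null-cohomologous partial cross-section. It is avoided by exactly the orbits that miss $T$, and these sections are pairwise non-isotopic since they have different numbers of components (compare the Remark after Theorem~\ref{thm-cross-section}). Fried's uniqueness uses that every orbit meets the section. Once some orbits avoid it, whether infinitely many sections appear depends on how the pieces of the avoided set are connected by orbits, which is what the graph $G_{\phi,\alpha}$ encodes. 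Your claim that there are finitely many candidates for $\Lambda_S$ is also only asserted. Transitivity of $\phi$ plays no essential role in your argument, whereas it is the crux.

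The paper uses Theorem~\ref{thm-ps-finiteness} (finitely many sections if and only if $G_{\phi,\alpha}$ is finite and transitive) and verifies both properties. First, by the local product structure, two periodic orbits with $\alpha=0$ passing close to each other have intersecting stable and unstable leaves. Hence they lie in the same $\alpha$-recurrence chain, so chains are open in the compact set $\Rec_\alpha$ and there are finitely many. Second, for two chains $R_1,R_2$, transitivity gives a periodic orbit $\delta$ passing near every chain. This yields a pseudo-orbit that stays in $R_1$ in the past, follows $\delta$, then stays in $R_2$ in the future. Its shadowing orbit lies in the unstable lamination of $R_1$ and the stable lamination of $R_2$, giving the edge $R_1\to R_2$.
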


For completeness, we mention the similar result for null-cohomologous partial cross-sections, which is well-known.

\begin{mainremark}
	Let $\phi$ be an Anosov flow. It admits a null-cohomologous partial cross-section if and only if $\phi$ is not transitive (see \cite{smale1967differentiable}). Additionally, when it is not transitive, $\phi$ admits a countable (infinite) number of null-cohomologous partial cross-sections that are not isotopic along the flow (see \cite[Theorem~4.3]{marty2025partialII}).
\end{mainremark}

Note that Mosher~\cite{Mosher1989,Mosher1990} proved the first claim in Theorem~\ref{thm-cross-section} for some pseudo-Anosov flows, which include all transitive Anosov flows. Theorem~\ref{thm-ps-existence} can be replaced by Mosher's results in some parts of this paper. We should also mention~\cite{Landry2024}, which provides stronger conclusions for pseudo-Anosov flows with no perfect fits.

The rest of our results consider the homology classes of periodic orbits more carefully. The flow is said to be \emph{homologically full} if every homology class in $H_1(M,\ZZ)$ is represented by a periodic orbit. 

\begin{maintheorem}\label{thm-hom-full-section}
	Let $\phi$ be a transitive Anosov flow (in any dimension). Then $\phi$ is homologically full if and only if it admits no partial cross-section. 
\end{maintheorem}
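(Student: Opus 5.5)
We prove the two implications separately; both rest on Theorem~\ref{thm-cross-section}, which says a non-zero class $\alpha\in H^1(M,\ZZ)$ carries a partial cross-section exactly when $\alpha(D_\phi)\geq 0$. Transitivity enters twice: by the Remark, a transitive $\phi$ has no null-cohomologous partial cross-section, so every partial cross-section has a non-zero class; and transitivity lets us build periodic orbits with prescribed homology by specification.

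\emph{Homologically full implies no partial cross-section.} Suppose $S$ is a partial cross-section. Its class $\alpha=[S]$ is non-zero, since $\phi$ is transitive. Because $S$ is transverse and co-oriented by the flow, every periodic orbit $\gamma$ crosses $S$ positively, so $\alpha([\gamma])=\gamma\cdot S\geq 0$. Pick $h\in H_1(M,\ZZ)$ with $\alpha(h)<0$; such an $h$ exists because $\alpha$ is a non-zero integral class, so it is non-zero on some integral homology class, and we may replace $h$ by $-h$. No periodic orbit represents $h$, so $\phi$ is not homologically full. This direction is immediate.

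\emph{No partial cross-section implies homologically full.} First we show that $0$ lies in the interior of $\conv(D_\phi)$, working in $H_1(M,\RR)$ modulo torsion. Suppose not. Then some non-zero linear form $\beta$ satisfies $\beta\geq 0$ on $\conv(D_\phi)$. This is where rationality matters: $D_\phi$ is generated by rational directions $[\gamma]/\len(\gamma)$, and we want the separating form to be integral. To get this:
\begin{enumerate}[(i)]
\item use that $\conv(D_\phi)$ is a compact convex set whose extreme directions come from periodic orbits;
\item use that the set of $\beta$ with $\beta(D_\phi)\geq 0$ is a closed convex cone with non-empty relative structure;
\item argue that if $0$ lies on the boundary, some supporting hyperplane through $0$ can be chosen rational.
\end{enumerate}
I expect step (iii) to be the main obstacle, because for a general compact convex set a supporting hyperplane need not be rational. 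The fallback is the following. If $0\notin\operatorname{int}\conv(D_\phi)$, then the closed cone $C$ generated by the homology classes of periodic orbits is not all of $H_1(M,\RR)$. Its dual cone $C^*$ then has non-empty interior, or at least contains a non-zero element. If $C^*$ has non-empty interior, it contains an integral $\alpha$. If it does not, $C$ contains a line, and we restrict to the quotient by the maximal linear subspace of $C$ to find a non-zero rational $\alpha\geq 0$; this uses that $C$ is generated by integral vectors, so that its lineality space is rational. Once we have a non-zero integral $\alpha$ with $\alpha(D_\phi)\geq 0$, Theorem~\ref{thm-cross-section} produces a partial cross-section, contradicting the hypothesis. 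Hence $0\in\operatorname{int}\conv(D_\phi)$.

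Next we realize every class by a periodic orbit. Fix $h\in H_1(M,\ZZ)$. Since $0$ is interior, we can choose periodic orbits $\gamma_1,\dots,\gamma_k$ whose classes positively span $H_1(M,\RR)$, together with positive integers $n_i$ such that $\sum n_i[\gamma_i]$ equals $h$ modulo a controlled error. Transitivity and the Anosov closing lemma (specification) then give a periodic orbit that shadows $\gamma_1^{n_1}\cdots\gamma_k^{n_k}$, joined by fixed connecting orbit segments. Its homology class is $\sum n_i[\gamma_i]+c$, where the correction $c$ comes from the connecting segments and does not depend on the $n_i$.

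It remains to hit $h$ exactly, torsion included. Using positive spanning, we choose the $n_i$ large so that $\sum n_i[\gamma_i]=h-c$ in the free part. For the torsion part, we either include a finite set of loops through one fixed transitive point, closed by shadowing, that realize every torsion element, or we absorb torsion through $\gamma$-multiples of order divisible by $|\tor H_1|$. Getting these homology bookkeeping details exactly right is routine but fiddly.
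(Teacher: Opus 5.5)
Your first implication is correct, and slightly more direct than the paper's, which passes through $D_\phi$ lying in a half-space. The converse has two genuine gaps. They are exactly the two lemmas the paper proves before the theorem.

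\textbf{Integrality of the supporting class.} Your fallback rests on the claim that a closed cone generated by integral vectors has rational lineality space. This is false. In $\RR^2$, the integral points of the half-plane $\{a\geq\sqrt2\,b\}$ generate a cone whose closure is the whole half-plane. Its lineality space is the irrational line $a=\sqrt2\,b$, and the only non-negative forms are positive multiples of $a-\sqrt2\,b$, none of them rational. Your argument only works when $C$ is pointed, which is the global cross-section case.

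What you need is that, for an Anosov flow, $\cone(D_\phi)$ is \emph{finitely} generated by integral vectors. Concretely, there are $\epsilon>0$ and finitely many periodic orbits $\delta_1,\dots,\delta_n$ such that every periodic $\epsilon$-pseudo-orbit is homologous to a non-negative combination of the $[\delta_i]$. The paper gets this from Lemma~2.3 of the first paper in the series plus shadowing; it also follows from a Markov partition. A polyhedral cone with integral generators is cut out by integral inequalities. So the supporting form can be taken in $H^1(M,\ZZ)$, and Theorem~\ref{thm-cross-section} applies.

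\textbf{Realizing every class.} Positive spanning of $H_1(M,\RR)$ by $[\gamma_1],\dots,[\gamma_k]$ gives $\sum_i\NN[\gamma_i]=\sum_i\ZZ[\gamma_i]$. Nothing forces this subgroup to be all of $H_1(M,\ZZ)$, even modulo torsion: the classes $2$ and $-2$ positively span $\RR$ yet generate only $2\ZZ$. So solving $\sum_i n_i[\gamma_i]=h-c$ is not routine bookkeeping, in the free part as much as in the torsion part.

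Your torsion remedies do not work either. An arbitrary loop is not a pseudo-orbit, so it cannot be closed by shadowing. Taking multiples divisible by $|\tor(H_1(M,\ZZ))|$ kills torsion instead of producing a prescribed torsion element.

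The missing input is Adachi's theorem: for a transitive Anosov flow, the classes of periodic orbits generate $H_1(M,\ZZ)$ as a group. It can be proved with transitivity and shadowing, by writing the class of any loop as an integer combination of classes of shadowed periodic pseudo-orbits. Once you enlarge your family so that $\sum_i\ZZ[\gamma_i]=H_1(M,\ZZ)$, your argument with a fixed periodic orbit passing near all the $\gamma_i$ concludes exactly as in the paper.
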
 

Regarding known Anosov flows, Anosov suspensions are not homologically full, but skewed $\RR$-covered Anosov flows are known to be homologically full (see~\cite[Lemma 5]{marty2025skewed}). As discussed further down, there exist non-$\RR$-covered Anosov flows that are homologically full, but the Bonatti-Langevin example is not homologically full.

We record the following special case for homology spheres, and a criterion in the spirit of Sullivan \cite{Sullivan1976}.

\begin{maincorollary}
	On a homology sphere, any transitive Anosov flow has no partial cross-section.
\end{maincorollary}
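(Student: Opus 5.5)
The plan is to derive the corollary from Theorem~\ref{thm-cross-section}, by showing that a homology sphere leaves no cohomology class for a partial cross-section to represent. Let $M$ be a homology sphere and $\phi$ a transitive Anosov flow on $M$. Then $H_1(M,\ZZ)=0$, so by universal coefficients $H^1(M,\ZZ)\cong \mathrm{Hom}(H_1(M,\ZZ),\ZZ)=0$. Every cohomology class is therefore zero.

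Now suppose $S$ is a partial cross-section of $\phi$. It is a closed hypersurface transverse to the flow, so it is co-oriented by $\phi$ and has a dual class $[S]\in H^1(M,\ZZ)$. This class is zero, so $S$ is null-cohomologous.

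The remark following Theorem~\ref{thm-cross-section} states that $\phi$ admits a null-cohomologous partial cross-section if and only if $\phi$ is not transitive. This is Smale's result \cite{smale1967differentiable}, and the mechanism is as follows. A null-cohomologous transverse surface separates $M$ into two regions, and the flow crosses $S$ in only one direction. An orbit leaving one region therefore never returns to it, which contradicts the existence of a dense orbit.

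Since $\phi$ is transitive, this is a contradiction, so $\phi$ has no partial cross-section.

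Alternatively, the corollary follows from Theorem~\ref{thm-hom-full-section}. Since $H_1(M,\ZZ)=0$, every homology class is the trivial class. Any periodic orbit represents it, and periodic orbits exist for Anosov flows. Hence $\phi$ is homologically full, and Theorem~\ref{thm-hom-full-section} says that it then admits no partial cross-section.

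There is no real obstacle in either argument. The only care needed is that "cohomologous to $\alpha$" in Theorem~\ref{thm-cross-section} is stated for nonzero $\alpha$, so the zero class has to be handled by the remark, or avoided entirely by using the second route.
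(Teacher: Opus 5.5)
Your second route is exactly the paper's: the corollary is recorded as the special case of Theorem~\ref{thm-hom-full-section} in which $H_1(M,\ZZ)=0$, so any Anosov flow (which has periodic orbits) is trivially homologically full. Your first route is also correct and slightly more direct, since it only needs the fact, which the paper itself invokes at the start of its proof of Theorem~\ref{thm-hom-full-section}, that a transitive flow has no null-cohomologous partial cross-section; the cleanest justification is that a periodic orbit entering the flow box of $S$ (periodic orbits are dense) meets $S$ with only positive intersections, so $[S]$ cannot vanish, and this avoids your slightly loose ``separates into two regions''/dense-orbit wording.
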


\begin{maincorollary}
	Let $\phi$ be a transitive Anosov flow (in any dimension). It admits a partial cross-section if and only if $\conv(D_\phi)$ does not contain zero in its interior.
\end{maincorollary}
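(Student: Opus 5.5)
We derive the corollary from the first part of Theorem~\ref{thm-cross-section}, adding a short argument for the null-cohomologous case. The plan is to show that both sides of the equivalence are the same as the existence of a non-zero class $\alpha\in H^1(M,\ZZ)$ with $\alpha(D_\phi)\geq 0$. We view $\conv(D_\phi)$ as a compact convex subset of $H_1(M,\RR)$.

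\emph{The partial cross-section side.} Since $\phi$ is transitive, the Remark following Theorem~\ref{thm-cross-section} (Smale's decomposition) says that $\phi$ has no null-cohomologous partial cross-section. So any partial cross-section $S$ has a non-zero class $\alpha=[S]\in H^1(M,\ZZ)$. Theorem~\ref{thm-cross-section} then says that $\phi$ admits a partial cross-section if and only if some non-zero integral class $\alpha$ satisfies $\alpha(D_\phi)\geq 0$.

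\emph{The convex-geometry side.} We use the standard fact that a compact convex set $C$ in a finite-dimensional real vector space fails to contain $0$ in its interior if and only if some non-zero linear form $\beta$ satisfies $\beta\geq 0$ on $C$. There are three cases.
\begin{itemize}
\item If $0\notin C$, strict separation gives such a $\beta$.
\item If $0\in\partial C$ and $C$ has non-empty interior, a supporting hyperplane at $0$ gives $\beta$.
\item If $C$ has empty interior, it lies in a proper subspace, and any form vanishing on that subspace works.
\end{itemize}
Conversely, if $0$ is interior, then $C$ contains a small ball around $0$, on which no non-zero form is non-negative. Since $\alpha(D_\phi)\geq0$ is equivalent to $\alpha\geq0$ on $\conv(D_\phi)$, it remains to pass from real forms $\beta\in H^1(M,\RR)$ to integral classes.

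\emph{Rationality: the main obstacle.} Here I would use the fact, recalled before Theorem~\ref{thm-cross-section}, that $\conv(D_\phi)$ is spanned by the points $\frac{1}{\len(\gamma)}[\gamma]$. Each of these points is a positive multiple of an integral class $[\gamma]$. So the cone $K$ generated by $D_\phi$ equals the closure of the cone generated by the rational vectors $[\gamma]$, for $\gamma$ periodic. The integral lattice is taken modulo torsion; torsion pairs trivially with $\alpha$.

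The statement "$0$ is not in the interior of $\conv(D_\phi)$" is equivalent to "$K\neq H_1(M,\RR)$". Suppose $K\neq H_1(M,\RR)$. I claim finitely many classes $[\gamma_i]$ already span a rational cone $K_0$ with $K_0=K$ or $K_0$ of full dimension; I expect this to be the delicate point. Rational polyhedral cones have rational dual cones, so a proper such cone has a non-zero rational, hence after scaling integral, form $\alpha$ that is non-negative on it.

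I would organise the argument as follows.
\begin{itemize}
\item If $K$ is contained in a proper subspace $V$, then $V$ can be taken to be the span of rational vectors, hence rational, and an integral $\alpha$ vanishing on $V$ exists.
\item If $K$ has full dimension but is not the whole space, then by Hyperbolic/Anosov closing and shadowing (Fried's description of $D_\phi$ as a compact set of directions) $D_\phi$ should in fact have rational polyhedral convex hull. This is the step I would check most carefully.
\item Failing that, I would argue directly. Pick $\beta$ non-negative on $K$. If $\beta(K)\geq0$ with $\beta$ irrational, perturb within the face structure. Alternatively, invoke the first part of Theorem~\ref{thm-cross-section} together with Fried/Mosher's openness, which says that the set of classes admitting sections is defined by a cone condition over rational data.
\end{itemize}
Granting this rationality, the two characterisations coincide, and the corollary follows.
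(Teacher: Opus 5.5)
There is a genuine gap, at the rationality step that you flag and then simply grant.

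\textbf{What is already fine.} The direction ``partial cross-section $\Rightarrow$ $0$ not interior'' is complete. Transitivity gives $[S]\neq0$. Theorem~\ref{thm-cross-section} gives $[S](D_\phi)\geq0$. A non-zero form that is non-negative on $\conv(D_\phi)$ keeps $0$ out of its interior.

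\textbf{Where the converse breaks.} From ``$0$ is not interior'' you only obtain a \emph{real} form $\beta\neq0$ with $\beta(D_\phi)\geq0$. Theorem~\ref{thm-cross-section} needs $\alpha\in H^1(M,\ZZ)$. This does not follow from $K$ being the closure of a cone spanned by integral vectors. In $\RR^2$, the closed half-plane $\{y\geq\sqrt2\,x\}$ is such a closure and is proper and full-dimensional. Yet every non-zero form non-negative on it is a multiple of $y-\sqrt2\,x$, so no integral $\alpha$ exists.

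Convex geometry alone does handle two cases: $K$ inside a proper subspace (your first bullet), and $K$ pointed, since then the dual cone has interior and contains rational points. It does not handle a proper, full-dimensional $K$ containing a line, such as the half-plane above, and nothing in your proposal rules this out. Your fallbacks do not close the gap:
\begin{itemize}
\item ``Perturbing within the face structure'' presupposes that $K$ is polyhedral with rational faces, which is the claim itself.
\item Fried's openness concerns the strict condition $\alpha(D_\phi)>0$ for global sections. It says nothing about the boundary case $\alpha(D_\phi)\geq0$ that matters here.
\item Closing and shadowing by themselves produce no finiteness.
\end{itemize}

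\textbf{The missing idea: finite generation of $\cone(D_\phi)$.} The paper proves this in the lemma opening Section~\ref{sec-hull-homol}. There exist $\epsilon>0$ and finitely many periodic orbits $\delta_1,\ldots,\delta_n$ such that every periodic $\epsilon$-pseudo-orbit is homologous to a non-negative integer combination of the $[\delta_i]$. The mechanism behind \cite[Lemma 2.3]{marty2025partialI} is a cutting argument:
\begin{itemize}
\item A periodic pseudo-orbit longer than some $T$ (fixed by a finite cover of $M$ by small balls) comes back close to itself.
\item Adding two jumps there splits it into two shorter periodic pseudo-orbits whose classes sum to the original one.
\item Iterating reduces everything to pseudo-orbits of length at most $T$. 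These are shadowed by the finitely many periodic orbits of bounded period.
\end{itemize}
A Markov partition would give the same conclusion via cycles of the transition graph.

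\textbf{How it finishes.} With this, $\cone(D_\phi)=\cone([\delta_1],\ldots,[\delta_n])$ is a rational polyhedral cone, so it is cut out by finitely many rational inequalities. If it is proper, one of these, after scaling, is a non-zero $\alpha\in H^1(M,\ZZ)$ with $\alpha(D_\phi)\geq0$, and Theorem~\ref{thm-cross-section} gives the section. With that input your direct route agrees with the paper's. The paper obtains the corollary by combining Theorem~\ref{thm-hom-full-section} with the lemma identifying homological fullness with $0$ lying in the interior of $\conv(D_\phi)$.
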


In the non-transitive case, the condition in Theorem \ref{thm-hom-full-section} does not suffice (see Theorem~\ref{thm-example-non-t}). We do not know which condition should replace it.

\vspace{\baselineskip}

In dimension 3, Theorem~\ref{thm-hom-full-section} yields information about Anosov flows on hyperbolic manifolds.

\begin{maintheorem}\label{thm-hyp-implies-hom-full}
	On a 3-dimensional hyperbolic manifold, any Anosov flow is homologically full.
\end{maintheorem}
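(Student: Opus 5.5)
The plan is to reduce the statement to Theorem~\ref{thm-hom-full-section}: it suffices to show that an Anosov flow $\phi$ on a closed hyperbolic $3$-manifold $M$ is transitive and admits no partial cross-section.

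\textbf{Transitivity.} If $\phi$ were not transitive, the Smale decomposition, refined by Brunella~\cite{brunella1993separating}, gives incompressible tori transverse to $\phi$ that separate the basic pieces. An incompressible torus in $M$ yields a $\ZZ^2$ subgroup of $\pi_1(M)$. A closed hyperbolic $3$-manifold contains no such subgroup, since $\pi_1$ of such a manifold acts cocompactly on $\mathbb{H}^3$ and has no $\ZZ^2$ subgroups. Hence $\phi$ is transitive.

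\textbf{No partial cross-section.} Suppose $S$ is a partial cross-section, that is, a closed surface transverse to $\phi$. Then:
\begin{itemize}
\item Restricting the stable foliation of $\phi$ to $S$ gives a one-dimensional foliation of $S$ with no singularities. By Poincaré--Hopf, every component of $S$ is a torus or a Klein bottle, and orientability of the normal direction (given by the flow) lets us reduce to tori, passing to a double cover if needed.
\item A torus transverse to an Anosov flow is $\pi_1$-injective. The standard argument is that a compressing disc, put in general position with respect to the induced foliation, would produce a Reeb-type configuration or a null-homotopic closed leaf. Both are excluded because the weak stable and unstable foliations are taut: every leaf is a plane or cylinder and lifts properly.
\item Therefore $\pi_1(M)$ would contain $\ZZ^2$, contradicting hyperbolicity.
\end{itemize}
So $\phi$ has no partial cross-section, and Theorem~\ref{thm-hom-full-section} gives that $\phi$ is homologically full.

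\textbf{Main obstacle.} The delicate step is the incompressibility of transverse tori, together with handling possibly non-orientable or disconnected sections. My plan is to cite the classical results: Brunella, and Fenley~\cite{fenley1994anosov}, show that tori transverse to Anosov flows are incompressible. Alternatively, I would argue via Novikov's theorem on Reebless foliations, applied to the weak stable foliation: a compressing disc would create a vanishing cycle. An alternative route avoids the topology of $S$ altogether. A non-zero class $\alpha$ with $\alpha(D_\phi)\ge 0$, obtained from Theorem~\ref{thm-cross-section}, gives a proper cone containing all periodic orbit classes. This contradicts density arguments for flows on hyperbolic manifolds, but it seems harder. I would therefore use the incompressible-torus argument.
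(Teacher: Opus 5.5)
Your proposal is correct and follows essentially the same route as the paper. Both arguments run through the same steps: $\chi(S)=0$ from the induced one-dimensional foliation, incompressibility of transverse tori (Brunella), the absence of $\ZZ^2$ subgroups in $\pi_1$ of a closed hyperbolic $3$-manifold (Preissmann), transitivity from Brunella's separating tori, and finally Theorem~\ref{thm-hom-full-section}. Your handling of Klein bottles via the orientation double cover is a harmless extra precaution beyond the paper's remark that every partial cross-section is a torus.
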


Note that Anosov flows on hyperbolic manifolds are known to exhibit geometric behaviors. Those that are $\RR$-covered are topologically equivalent to Reeb-Anosov flows~\cite{marty2023skewed}. Those that are not $\RR$-covered were proven to be quasi-geodesic by Fenley~\cite{fenley2026non}.

\vspace{\baselineskip}

Our characterization also provides an alternative and elementary proof of the following theorem of Sharp. 

\begin{maintheorem}[Sharp, Theorem 1 in~\cite{sharp}]\label{thm-Sharp}
	Let $\phi$ be a transitive Anosov flow. Then $\phi$ is homologically full if and only if there exists a $\phi$-invariant probability measure that is totally supported and null-homologous.
\end{maintheorem}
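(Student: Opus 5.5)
We prove Theorem~\ref{thm-Sharp} by passing through Theorem~\ref{thm-hom-full-section} and the convexity description of $D_\phi$. Since $\phi$ is transitive, Theorem~\ref{thm-hom-full-section} says that $\phi$ is homologically full if and only if it admits no partial cross-section. By Theorem~\ref{thm-cross-section}, this happens if and only if no non-zero $\alpha\in H^1(M,\ZZ)$ satisfies $\alpha(D_\phi)\geq 0$. By density of rational classes and homogeneity, this is equivalent to saying that no non-zero real class $\alpha \in H^1(M,\RR)$ satisfies $\alpha \geq 0$ on $\conv(D_\phi)$. That in turn is equivalent to $0$ lying in the interior of $\conv(D_\phi)$ (the corollary stated above). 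The remaining task is to show that $0\in\operatorname{int}\conv(D_\phi)$ if and only if there is a totally supported, null-homologous invariant probability measure.

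\textbf{Direction ($\Leftarrow$).} Let $\mu$ be a fully supported invariant probability measure with homology class (Schwartzman asymptotic cycle) $[\mu]=0$.

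\begin{enumerate}
\item Suppose, for contradiction, that $0$ is not in the interior of $\conv(D_\phi)$. By Hahn--Banach there is a non-zero $\alpha$, which we may take integral by the first step, with $\alpha\geq0$ on $D_\phi$.
\item Theorem~\ref{thm-cross-section} then gives a partial cross-section $S$ dual to $\alpha$.
\item Represent $\alpha$ by a closed $1$-form $\omega$ supported near $S$ with $\omega(X)\geq 0$ everywhere, and $\omega(X)>0$ on a neighbourhood $U$ of $S$. This is possible because $S$ is transverse to $\phi$.
\item Then $\langle \alpha,[\mu]\rangle=\int\omega(X)\,d\mu\geq \int_U \omega(X)\,d\mu>0$, since $\mu(U)>0$ by full support. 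This contradicts $[\mu]=0$.
\end{enumerate}

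\textbf{Direction ($\Rightarrow$).} Assume $0$ is interior to $\conv(D_\phi)$.

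\begin{enumerate}
\item Choose finitely many periodic orbits $\gamma_1,\dots,\gamma_k$ whose normalized classes $[\gamma_i]/\len(\gamma_i)$ have $0$ in the interior of their convex hull. Then pick weights $t_i>0$, all strictly positive, with $\sum t_i[\gamma_i]/\len(\gamma_i)=0$.
\item Let $\nu=\sum t_i\,\delta_{\gamma_i}$, where $\delta_{\gamma_i}$ is the normalized Lebesgue measure on $\gamma_i$. It is invariant and null-homologous, but not fully supported.
\item Fix a fully supported invariant probability $m$; for a transitive Anosov flow one can take the measure of maximal entropy, or a Bowen measure on a dense set of periodic orbits. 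Its class $[m]$ lies in $\conv(D_\phi)$.
\item Since $0$ is interior, $-\varepsilon[m]$ lies in $\conv(D_\phi)$ for small $\varepsilon>0$. So there is a finite positive periodic-orbit combination $\nu'$ with $[\nu']=-\varepsilon[m]$, obtained by rescaling the above construction.
\item Set $\mu=(\varepsilon m+\nu')/\text{mass}$. It is invariant, fully supported because $m$ is, and null-homologous.
\end{enumerate}

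The main obstacle is Step~3 of ($\Leftarrow$): producing a closed form dual to $S$ that is non-negative on the flow and positive near $S$. We would take $\omega=d\rho$ for a bump function $\rho$ of a flow-box collar $S\times(-\delta,\delta)$, increasing in the flow parameter. We must also check that this form really represents $\alpha$, using the fact that $S$ is cohomologous to $\alpha$.
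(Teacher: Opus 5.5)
Your architecture is the paper's. Your ($\Leftarrow$) is the appendix lemma specialised to $[\mu]_\phi=0$: a partial cross-section, the flow-box form $d\rho$ extended by zero, and full support forcing a strictly positive pairing. Your ($\Rightarrow$) is the same ``add periodic-orbit measures to cancel $[\mu]$'' argument, with a careful $\varepsilon$-rescaling.

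\textbf{The gap: real versus integral $\alpha$.} The step that is wrong as written is passing from a real to an integral $\alpha$ ``by density of rational classes and homogeneity''. The set of $\alpha$ with $\alpha(D_\phi)\geq 0$ is a closed convex cone that can have empty interior and contain no non-zero rational point. For instance, if $\conv(D_\phi)\subset\RR^2$ were the segment from $(1,\sqrt2)$ to $(-1,-\sqrt2)$, the admissible $\alpha$ would form the line $\RR(\sqrt2,-1)$, which contains no non-zero integral class. Density therefore gives nothing.

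What makes the step true here is the polyhedrality lemma of Section~\ref{sec-hull-homol}: $\cone(D_\phi)$ is spanned by finitely many integral vectors. Its dual cone is then rational polyhedral and, if non-zero, contains a non-zero integral point. This is exactly how the paper makes $\alpha$ integral.

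You rely on the faulty step in two places, and neither actually needs it.
\begin{itemize}
\item In the implication ``homologically full $\Rightarrow 0\in\operatorname{int}\conv(D_\phi)$'', the conclusion is trivial: homological fullness forces $\cone(D_\phi)=H_1(M,\RR)$.
\item In Step~1 of ($\Leftarrow$), you can bypass it: if $\phi$ is not homologically full, Theorem~\ref{thm-hom-full-section} gives you a partial cross-section $S$ directly.
\end{itemize}

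\textbf{The ``main obstacle'' is not one.} Because $[\mu]=0$, every closed $1$-form $\omega$ satisfies $\int\omega(X)\,d\mu=0$, since exact forms integrate to zero by invariance. So you never need $[\omega]=\alpha$. The paper needs $[\beta]=\alpha$ only because its lemma treats an arbitrary $[\mu]_\phi$ on the boundary of the cone.
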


We give the alternative proof in the appendix. Note that Sharp's original proof relies on the thermodynamic formalism, which is quite different from ours.

\paragraph{Acknowledgment.} The present research was conducted at the Max-Planck-Institut for Mathematics in Bonn, and at the Institut de Mathématique de Bourgogne in Dijon (with the bourse Brunella). I thank both institutes for their hospitality.


\section{Preliminaries}\label{sec-prelem}

A \emph{partial cross-section} is an embedded compact surface, of class $\Class^1$, and transverse to the flow. A partial cross-section $S$ is naturally co-oriented by the flow, yielding a cohomology class denoted by $[S]$ in $H^1(M,\ZZ)$. A \emph{global cross-section} is a partial cross-section that intersects every orbit of the flow. Global cross-sections are well-understood. It is useful to consider partial cross-sections up to \emph{isotopy along the flow}, that is, up to isotopy among partial cross-sections. 

Take some small $\epsilon>0$. 
We equip $M$ with an arbitrary Riemannian metric.
A periodic \emph{$\epsilon$-pseudo-orbit} is a periodic curve $\gamma\colon\ZZ/L\ZZ\to M$ such that there exists a finite set $\Delta\subset\ZZ/L\ZZ$ with:
\begin{itemize}
	\item any two points in $\Delta$ are at distance at least $1$,
	\item $\gamma$ is continuous outside $\Delta$,
	\item on each complementary component of $\Delta$, $\gamma$ coincides with an orbit arc (that is, $\phi_s(\gamma(t))=\gamma(t+s)$ holds for any $t\not\in\Delta$ and any small enough $s>0$),
	\item for any $s\in\Delta$, the left and right limits of $\gamma$ at $s$ lie at distance at most $\epsilon$ (in $M$) from each other.
\end{itemize}

Below, we denote by $\len(\gamma)=L$ the length of $\gamma$.
The following fact is well-known and very useful to simplify the study of partial cross-sections.

\begin{lemma}[Shadowing Lemma~\cite{katok1995introduction}]
    Let $\phi$ be an Anosov flow. For any $\eta>0$ there exists $\epsilon>0$ so that any (periodic or not) $\epsilon$-pseudo-orbit $\gamma$ of $\phi$ is $\eta$-shadowed by a unique (real) orbit $\delta$ of $\phi$. That is, there is a reparametrization $\sigma\colon\RR\to\RR$ of $\delta$ so that $d(\delta\circ\sigma(t),\gamma(t))<\eta$ holds for all $t\in\RR$.
\end{lemma}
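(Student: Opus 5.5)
Since the Shadowing Lemma is classical, I will only sketch the standard argument rather than reprove it fully. The approach is the usual one through local product structure combined with a fixed-point or contraction argument. First I fix $\eta>0$ small compared to the size of the local product neighborhoods of the stable and unstable foliations $W^{s}$, $W^{u}$ (and their weak versions $W^{cs}=\bigcup_t\phi_t W^s$, $W^{cu}$). Then I choose $\epsilon\ll\eta$. The pseudo-orbit $\gamma$ is a concatenation of orbit arcs $\gamma_i=\gamma|_{[s_i,s_{i+1}]}$, where $\Delta=\{s_i\}$. Each arc has length at least $1$, and consecutive endpoints are $\epsilon$-close. The convention that jumps are separated by time at least $1$ is exactly what guarantees uniform hyperbolic contraction between jumps, after replacing the time unit by a fixed multiple if needed; one can also subdivide long arcs harmlessly.

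\textbf{Existence.} I would follow the standard construction by successive local products. Near each jump $s_i$ I take the local product bracket $[x,y]=W^{cs}_{loc}(x)\cap W^{u}_{loc}(y)$; equivalently, use the adapted local coordinates. Then I build a true orbit by the following limiting procedure. For a finite window $[s_{-n},s_{n}]$, define the point $x_n$ inductively by taking brackets forward along $W^{cs}$ and backward along $W^{u}$. Hyperbolicity (exponential contraction of $W^s$ forward and of $W^u$ backward, with rate $\lambda^{s_{i+1}-s_i}\le\lambda<1$) makes the accumulated errors form a geometric series $\sum\lambda^{k}\epsilon$. This keeps the constructed orbit within $C\epsilon<\eta$ of $\gamma$, up to a time reparametrization $\sigma$ absorbing the drift in the flow direction at each bracket. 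A compactness argument as $n\to\infty$ then yields an orbit $\delta$ and a reparametrization $\sigma$ with $d(\delta\circ\sigma(t),\gamma(t))<\eta$ for all $t$. Equivalently, and more cleanly, one can set this up as a fixed point of a contraction on the Banach manifold of sequences of transverse points, via the hyperbolic splitting.

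\textbf{Uniqueness.} This is expansivity of Anosov flows. If two orbits $\delta,\delta'$ are both $\eta$-close to $\gamma$ after reparametrization, then they $2\eta$-shadow each other for all time. For $2\eta$ below the expansivity constant, their transverse displacement in both the $E^s$ and $E^u$ directions would grow under the flow in one time direction unless it vanishes. Hence $\delta'$ lies on the same orbit as $\delta$, and the shadowing orbit is unique.

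\textbf{Periodic case.} If $\gamma$ is periodic, uniqueness forces $\delta$ to be periodic as well. Indeed, shifting $\gamma$ by its period $L$ gives the same pseudo-orbit, so the shifted shadow $\delta(\cdot+\sigma(L)-\sigma(0))$ is also a shadow, and therefore coincides with $\delta$.

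The main obstacle is controlling the flow-direction drift through the reparametrization $\sigma$. The neutral direction gives no contraction, so $\sigma$ must be defined piecewise, with a bounded slope correction at each jump. Here the hypothesis that jumps are spaced by at least $1$ is what keeps $\sigma$ a genuine homeomorphism close to the identity on each arc.
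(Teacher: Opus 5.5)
The paper does not prove this lemma; it quotes it from Katok--Hasselblatt. Your existence and uniqueness sketch is the standard argument, so there is nothing to compare there. Note that uniqueness genuinely requires $\eta$ below half the expansivity constant, so ``for any $\eta>0$'' must be read as ``for any small $\eta>0$'', as you implicitly do.

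There is, however, a real gap in your periodic case. The uniqueness you establish is uniqueness of the shadowing orbit as an unparametrized orbit. The time-shifted curve $\delta(\cdot+T)$, with $T=\sigma(L)-\sigma(0)$, is the same orbit as $\delta$ for every $T$, whether or not $\delta$ is periodic. So uniqueness is satisfied trivially by the shift, and it cannot give the equality of parametrized curves $\delta(\cdot+T)=\delta$ that you conclude. The reparametrization $\sigma$ is not unique either.

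What you need is the local, quantitative form of expansivity (Bowen--Walters): for $\eta$ small there is $c(\eta)\to0$ with the following property. If $d(\phi_u(x),\phi_{h(u)}(y))<2\eta$ for all $u\in\RR$, with $h$ an increasing homeomorphism and $h(0)=0$, then $y=\phi_s(x)$ for some $|s|<c(\eta)$.

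Apply it to $x=\delta(\sigma(0))$, $y=\delta(\sigma(L))$ and $h(\sigma(t)-\sigma(0))=\sigma(t+L)-\sigma(L)$. The hypothesis holds because $\gamma(t+L)=\gamma(t)$. You get $\phi_T(x)=\phi_s(x)$, i.e.\ $\phi_{T-s}(x)=x$. Moreover $T-s\neq0$, since $T$ is close to $L\geq1$ by your slope control on $\sigma$ while $|s|$ is small. Hence $\delta$ is periodic.

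Your transverse-displacement argument in fact proves this local form: zero $E^s$ and $E^u$ displacement puts $y$ in $W^{cs}_{\mathrm{loc}}(x)\cap W^{cu}_{\mathrm{loc}}(x)$, which is a short orbit segment through $x$. So state expansivity that way and use it, rather than the weaker ``same orbit'' conclusion. This matters because the paper repeatedly relies on periodic pseudo-orbits being shadowed by periodic orbits.
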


Note that when $\epsilon$ is smaller than half the injectivity radius of $M$, one can associate a canonical homology class $[\gamma]$ in $H_1(M,\ZZ)$ to an $\epsilon$-pseudo-orbit $\gamma$, by connecting the jumps in $\gamma$ by $\epsilon$-small curves. When $\epsilon$ is small enough, $[\gamma]$ is equal to the homology class of the unique periodic orbit that shadows $\gamma$. Denote by $D_{\phi,\epsilon}$ the compact convex set defined by

$$D_{\phi,\epsilon}=\overline\conv\left\{\frac{1}{\len(\gamma)}[\gamma],\gamma\text{ a periodic $\epsilon$-pseudo-orbit}\right\}.$$

Then define $D_\phi$ as the decreasing intersection of the $D_{\phi,\epsilon}$.

\begin{theorem}[Fried~\cite{Fried82}]
	Let $\phi$ be a (not necessarily Anosov) $\Class^1$ flow and take $\alpha$ in $H^1(M,\ZZ)$. The class $\alpha$ is cohomologous to a global cross-section if and only if $\alpha(D_\phi)>0$ holds. Additionally, any two global cross-sections that are cohomologous are isotopic along the flow.
\end{theorem}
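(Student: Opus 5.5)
The plan is to prove both directions by passing through closed $1$-forms that are positive on the generator $X$ of $\phi$. The key equivalence to use is the following: a class $\alpha\in H^1(M,\ZZ)$ is represented by a global cross-section if and only if it is represented by a closed $1$-form $\omega$ with $\omega(X)>0$ everywhere. The converse direction is standard. Since $\alpha$ is integral, $\omega=\theta^*d\vartheta$ for a submersion $\theta\colon M\to\RR/\ZZ$, obtained by integrating $\omega$ along paths. Any fiber $\theta^{-1}(pt)$ is then transverse to $X$ because $\omega(X)>0$. It meets every orbit, since $\theta$ increases at a definite rate along orbits.

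\textbf{Necessity.} Let $S$ be a global section and $R$ a bound on its first-return time. Take a bump form $\omega_0$ supported in a thin flow box around $S$, Poincaré dual to $S$. Then $\omega_0(X)\geq 0$, and the integral of $\omega_0$ along any orbit arc of length $R$ is at least $1$. Averaging along the flow gives the cohomologous form
$$\omega=\frac1R\int_0^R\phi_t^*\omega_0\,dt,$$
which satisfies $\omega(X)\geq c>0$ for some constant $c$. Now let $\gamma$ be a periodic $\epsilon$-pseudo-orbit of length $L$. Its jumps are $1$-separated, so there are at most $L$ of them. Closing the jumps with $\epsilon$-short curves gives
$$\alpha([\gamma])\geq cL-C\epsilon L,$$
where $C=\|\omega\|_\infty$. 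Hence $\alpha\geq c-C\epsilon>0$ on $D_{\phi,\epsilon}$ for $\epsilon$ small, and therefore $\alpha(D_\phi)>0$.

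\textbf{Sufficiency.} Pick any closed form $\omega$ representing $\alpha$ and set $g=\omega(X)$. The aim is a time $T$ with $\int_0^T g(\phi_t x)\,dt>0$ for every $x$. Given such a $T$, the form $\omega_T=\frac1T\int_0^T\phi_t^*\omega\,dt$ is cohomologous to $\omega$ and satisfies $\omega_T(X)>0$, and we conclude by the equivalence above.

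Suppose no such $T$ exists. Then there are $x_n$ and $T_n\to\infty$ whose Birkhoff averages of $g$ over $[0,T_n]$ are $\leq 0$. A weak-$*$ limit of the corresponding empirical measures gives an invariant probability measure $\mu$ with $\int g\,d\mu\leq0$. Passing to an ergodic component, we may assume $\mu$ is ergodic. The step I expect to be the main obstacle is showing that the asymptotic cycle of $\mu$ (Schwartzman's $\int \omega\, d\mu$ paired with $\alpha$) lies in $D_\phi$. For this I would take a $\mu$-generic point $x$. Its long orbit segments have $g$-average close to $\int g\,d\mu$, and by recurrence they return $\epsilon$-close to $x$. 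Closing such a segment with a single $\epsilon$-jump gives a periodic $\epsilon$-pseudo-orbit $\gamma$ with $\alpha([\gamma])/\len(\gamma)$ arbitrarily close to $\int g\,d\mu\leq 0$. This holds for every $\epsilon$, which contradicts $\alpha>0$ on the compact set $D_\phi$. Compactness also gives some $\epsilon$ with $\alpha>0$ on $D_{\phi,\epsilon}$, which makes the estimates uniform.

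\textbf{Uniqueness.} Let $S_0,S_1$ be cohomologous global sections. They correspond to positive forms $\omega_0,\omega_1$, which can be chosen as pullbacks $\theta_i^*d\vartheta$ with $S_i=\theta_i^{-1}(0)$. Since the classes agree, $\theta_1=\theta_0+h$ for a function $h\colon M\to\RR$. The interpolation $\theta_s=\theta_0+sh$ has $d\theta_s(X)>0$ for all $s\in[0,1]$ by convexity, so the fibers $\theta_s^{-1}(0)$ form a continuous family of global sections, and hence an isotopy among partial cross-sections. Moreover, each point of $\theta_s^{-1}(0)$ moves along its own orbit, because $\theta_s$ is strictly monotone along orbits. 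This gives isotopy along the flow.
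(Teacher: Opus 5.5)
Your proof is correct, but it does not follow the paper: the paper does not prove this statement. It quotes Fried's theorem, whose $D_\phi$ is defined through long orbit segments closed up by one short arc. It then transfers the result to the pseudo-orbit definition used here, via the equality of convex hulls proved in the appendix of \cite{marty2025partialI}.

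You instead give a self-contained Schwartzman--Fried type argument directly for the pseudo-orbit $D_\phi$:
\begin{itemize}
\item necessity by averaging a Poincar\'e-dual bump form along the flow;
\item sufficiency by Krylov--Bogolyubov plus recurrence of a generic point;
\item uniqueness by convex interpolation of circle-valued maps.
\end{itemize}
Your version makes the comparison of definitions unnecessary, and in effect contains it. Necessity absorbs arbitrarily many jumps because they are $1$-separated, at a cost of at most $C\epsilon$ per unit length. Sufficiency only ever uses single-jump closings of recurrent orbits, which are exactly Fried's almost-closed segments. The citation buys brevity and imports the full classical theorem without redoing it.

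Three details should be made explicit.

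\textbf{Level sets in the uniqueness step.} The claim that $S_i=\theta_i^{-1}(0)$ for some $\theta_i$ with $d\theta_i(X)>0$ does not come out of your necessity construction. A circle-valued primitive of the averaged form has level sets that are global sections cohomologous to $S_i$, but generally not $S_i$ itself. There are two easy fixes.
\begin{itemize}
\item Define $\theta_i$ from the first-return time: reparametrize each return arc monotonically onto $[0,1]$, with a fixed slope near both ends, so that it glues in class $\Class^1$ across $S_i$.
\item Interpolate $\theta_0+sk$ between the bump map $\theta_0$ and the averaged map $\theta_0+k$. Here $S=\theta_0^{-1}(1/2)$ and $d\theta_0(X)>0$ along $S$. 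For $s>0$ you have $d(\theta_0+sk)(X)\geq sc$, so all levels stay transverse. Your orbit-following argument then gives an isotopy along the flow from $S$ to a fiber.
\end{itemize}

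\textbf{Counting jumps.} The bound ``at most $L$ jumps'' relies on reading the definition as requiring every orbit arc between consecutive jumps to have length at least $1$, even when there is a single jump. This is the intended definition. Otherwise very short one-jump loops would put $0$ in $D_\phi$, and the statement would fail.

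\textbf{Choice of $R$.} Take $R$ to be the maximal return time plus the width of the flow box, so that every orbit arc of length $R$ crosses the whole box. The maximal return time is finite by compactness and transversality.
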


Note that Fried's theorem uses a different definition for $D_\phi$, but their convex hulls are equal (see~\cite[Appendix]{marty2025partialI}), so the above version holds.

\begin{theorem}[Marty~\cite{marty2025partialII}]\label{thm-ps-existence}
	Let $\phi$ be a continuous flow and $\alpha$ in $H^1(M,\ZZ)$ be non-zero. There exists a partial cross-section cohomologous to $\alpha$ if and only if $\alpha(D_{\phi,\epsilon})\geq0$ holds for some small $\epsilon>0$.
\end{theorem}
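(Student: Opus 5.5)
This is Theorem~\ref{thm-ps-existence}, stated as a result from the earlier paper \cite{marty2025partialII} and used here as a black box; still, here is how I would prove it. There are two implications. The necessity direction ($\Rightarrow$) is elementary, and the sufficiency direction ($\Leftarrow$) requires a construction in the spirit of Fried's argument.

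For ($\Rightarrow$), let $S$ be a partial cross-section with $[S]=\alpha$. By compactness and transversality, $S$ has a tubular flow-box neighborhood $S\times[-\delta,\delta]$ in which every orbit crosses $S$ positively. I choose $\epsilon$ much smaller than $\delta$, and smaller than the distance from $S\times\{\pm\delta/2\}$ to the complement of the flow box. Now take a periodic $\epsilon$-pseudo-orbit $\gamma$ and close its jumps by $\epsilon$-short arcs, giving a loop representing $[\gamma]$. Then $\alpha([\gamma])$ is the algebraic intersection number of this loop with $S$. Orbit segments only cross $S$ positively. A short jump arc can be taken to avoid $S$, or to cross it once near its endpoints. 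In the latter case both endpoints lie in the flow box, and the jump is compensated by the orbit segment on one side: up to a bounded correction, a negative crossing by a jump arc is paired with a positive crossing by the adjacent orbit arc. Consecutive jumps are separated by time at least $1$. Counting crossings therefore gives $\alpha([\gamma])\ge 0$, so $\alpha\ge0$ on all the generators of $D_{\phi,\epsilon}$, and hence on $D_{\phi,\epsilon}$ itself. The delicate point is the bookkeeping when a jump lands inside the flow box. I would handle it by a local argument: inside $S\times[-\delta,\delta]$, replace the combination of jump arc and orbit piece by a path that is monotone in the flow direction.

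For ($\Leftarrow$), I follow Fried's approach. Represent $\alpha$ by a map $f\colon M\to \RR/\ZZ$, or by a closed $1$-form $\omega$, and try to modify $f$ so that it is non-decreasing along orbits, strictly increasing near a regular level. Define the function
$$h(x)=\inf\{\textstyle\int_\gamma \omega : \gamma \text{ an $\epsilon$-pseudo-orbit starting at } x\}.$$
The hypothesis $\alpha(D_{\phi,\epsilon})\ge0$ says that closed $\epsilon$-pseudo-orbits have non-negative $\omega$-period. This makes $h$ finite, by a closing and concatenation argument that gives a uniform lower bound on $\int\omega$ along arbitrarily long pseudo-orbits. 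The function $F=f-h$, taken on the cyclic cover associated with $\alpha$, is then non-decreasing along the flow, because precomposing with a flow segment can only restrict the infimum. Its $\epsilon$-jump flexibility makes it locally constant on small balls in the directions transverse to the flow, up to control.

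The main obstacle is to extract a $\Class^1$ transverse surface from a function that is only monotone, not strictly increasing. Since $\alpha\neq0$, $F$ is not constant along every orbit, so some orbits increase. I would select a value $c$ and consider the region where $F$ crosses $c$. Using the $\epsilon$-slack, I would smooth $F$ by averaging along short flow segments and over $\epsilon$-balls. This yields a function whose derivative along the flow is positive on a neighborhood of one level set; that level set is then a compact surface transverse to the flow, and it is Poincaré dual to $\alpha$. I expect the hardest step to be ensuring that the chosen level lies entirely in the strictly increasing region. I would try first to choose $c$ in the interior of the image of a wandering region, and to exploit that $h$ has $\epsilon$-uniform continuity, so that level sets cannot touch the set where $F$ is flat along orbits.
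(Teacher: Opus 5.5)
Your necessity argument is right, and it needs no ``bounded correction''. A jump arc that crosses $S$ negatively starts at some $\phi_t(s)$ with $s\in S$ and $0<t<\delta$. The preceding orbit arc has length at least $1$, so it crossed $S$ positively at $s$. This injects negative crossings into positive ones and gives $\alpha([\gamma])\geq 0$ exactly. The paper does not prove this theorem; it imports it from \cite{marty2025partialII}. So your sufficiency direction has to stand on its own, and there it has a genuine gap.

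The step you defer as ``the hardest'' is the whole content of the converse, and the proposal does not carry it out.

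First, a sign problem: with your sign, $f-h$ is not monotone. Writing $\hat f$ for the lift of $f$ to the cyclic cover, the monotone object is $G=\hat f+h$, i.e.\ $G(x)=\inf\{\hat f(y) : y \text{ reachable from } x\}$.

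Second, the regularity you claim for $h$ is unsupported. Jumps must be separated by time $1$, so a point $x'$ near $x$ only reaches what $x$ reaches after flowing for time $1$. This gives $G(x')\leq G(\phi_1(x))$, which yields neither transverse local constancy nor continuity.

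Third, and most importantly, $G$ is flat in many places. It is constant along every orbit segment whose downstream infimum is attained later. It is also constant along the lift of every periodic orbit $\gamma$ with $\alpha(\gamma)=0$. Averaging along flow segments or over balls preserves monotonicity, but it cannot create strict increase where $G$ is flat on time scales longer than the averaging window. You therefore need a level $c$ whose \emph{entire} level set $\{G=c\}$ avoids the flat set, and you give no argument that such a $c$ exists. Choosing $c$ in the image of one wandering region says nothing about the rest of that level set.

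This is exactly where the hypothesis $\geq 0$ differs from Fried's $>0$. In Fried's case, averaging a representative form along the flow for a long time directly produces a closed form that is positive along the flow. Under $\alpha(D_{\phi,\epsilon})\geq 0$, the same averaging only produces a form that is $\geq -\eta$ along the flow. Some genuinely new mechanism is needed, and you would have to show that the hypothesis makes it work. Possible directions are a modification of $G$ on the $\alpha$-recurrent part, or a trapping-region construction in the cyclic cover yielding a compact surface crossed strictly by the flow. As written, the converse is a plan rather than a proof.
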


The same article also provides a description of the set of all partial cross-sections up to isotopy along the flow, see~\cite[Theorem B]{marty2025partialII}. We will not need the full description here, but we will use one of its consequences: a finiteness criterion. We refer to~\cite{marty2025partialII} for the construction on general flows. Here, we only consider Anosov flows, for which the definitions can be quite simplified. We let the reader verify that, using the shadowing lemma, the definitions given below and in the reference are equivalent for Anosov flows.

We fix a class $\alpha$ in $H^1(M,\ZZ)$, non-zero and with $\alpha(D_\phi)\geq0$. Call \textbf{$\alpha$-recurrence set}, denoted by $\Rec_\alpha$, the closure of the union of the periodic orbits $\gamma$ that satisfy $\alpha(\gamma)=0$. Its connected components are called the \emph{$\alpha$-recurrence chains}. We define the oriented graph $G_{\phi,\alpha}$ whose vertices are the $\alpha$-recurrence chains (possibly empty), and, given two $\alpha$-recurrence chains $R_1,R_2$, there is an edge $R_1\to R_2$ if the unstable lamination of $R_1$ intersects the stable lamination of $R_2$.

\begin{theorem}[Marty~\cite{marty2025partialII}]\label{thm-ps-finiteness}
	Assume that $\phi$ is Anosov and take $\alpha$ non-zero in $H^1(M,\ZZ)$ so that there exists a partial cross-section cohomologous to $\alpha$. The set of partial cross-sections cohomologous to $\alpha$ is finite if and only if $G_{\phi,\alpha}$ is finite and transitive (as an oriented graph).
\end{theorem}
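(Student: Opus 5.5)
The plan is to encode a partial cross-section $S$ cohomologous to $\alpha$ by combinatorial data on the $\alpha$-recurrence chains, and then to count these data up to isotopy along the flow.

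\textbf{Step 1: lift to the cyclic cover.} First observe that $S$ is disjoint from $\Rec_\alpha$. Every intersection of a periodic orbit $\gamma$ with $S$ is positive and counts towards $\alpha(\gamma)$. So a periodic orbit with $\alpha(\gamma)=0$ never meets $S$, and by closedness and transversality neither does the closure of the union of such orbits. Next, let $\widehat M\to M$ be the infinite cyclic cover associated with $\alpha$. Choose a lift $\widehat S$ of $S$, so that $\widehat M\setminus\widehat S$ has a \emph{past side} and a \emph{future side}, and the future side is forward-invariant under the lifted flow. Each chain $R$ lifts to a $\ZZ$-family of copies $\widehat R_k$. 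Define the \emph{height} $h_S(R)\in\ZZ$ as the largest $k$ such that $\widehat R_k$ lies on the past side. This is well defined up to a global additive constant, which comes from the choice of lift.

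\textbf{Step 2: constraints from the edges of $G_{\phi,\alpha}$.} If $R_1\to R_2$ is an edge, there is an orbit going from near $R_1$ to near $R_2$. Forward-invariance of the future side then forces an inequality $h_S(R_2)\ge h_S(R_1)+c(R_1,R_2)$, where $c$ is a fixed integer determined by $\alpha$ and the connecting orbit. The shadowing lemma lets us concatenate connecting arcs along a cycle $R_1\to\cdots\to R_n\to R_1$ into periodic pseudo-orbits. The hypothesis $\alpha(D_{\phi,\epsilon})\ge0$ then shows that the sum of the $c$'s around any cycle is at most $0$, so the constraints are consistent.

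\textbf{Step 3: classification and counting.} I would invoke (or reprove in this setting) the description of~\cite[Theorem B]{marty2025partialII}. It states that $S\mapsto h_S$, modulo global translation, is a bijection between isotopy classes along the flow and admissible height functions. Granting this, the counting is elementary.
\begin{itemize}
\item If $G_{\phi,\alpha}$ is finite and strongly connected, every pair $R,R'$ lies on a cycle through both. The inequalities along the two arcs of such a cycle bound $h(R')-h(R)$ from above and below. Hence there are finitely many admissible $h$ modulo translation.
\item If $G_{\phi,\alpha}$ is not transitive, there is a nontrivial partition of the vertices into $A\sqcup B$ with no edge from $B$ to $A$. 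Adding any large $N$ to $h$ on $B$ preserves admissibility and gives infinitely many classes.
\item If $G_{\phi,\alpha}$ is infinite (infinitely many chains, possibly accumulating), a similar free shift on an infinite family of vertices again gives infinitely many classes.
\end{itemize}

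\textbf{Main obstacle.} The hard step is the realization half of Step 3. Given an admissible $h$, one must construct a transverse section in $\widehat M$ separating the $\widehat R_k$ accordingly. I would first build a closed, forward-invariant "future set" in $\widehat M$: the forward saturation of $\bigcup_R\widehat R_{h(R)+1}$ together with the unstable laminations of these lifts. Admissibility of $h$ should guarantee that this set avoids every $\widehat R_{h(R)}$. I would then smooth its boundary into a transverse hypersurface, using a Lyapunov-function argument on the complement of $\Rec_\alpha$, as in the proof of Theorem~\ref{thm-ps-existence}. Injectivity, meaning that distinct height functions give non-isotopic sections, is easier: an isotopy along the flow never crosses $\Rec_\alpha$, so it preserves $h_S$.
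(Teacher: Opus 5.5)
Your architecture is the natural one, and it is how the statement arises. You record on which side of a lift of $S$ each lift of each $\alpha$-recurrence chain lies, then count the admissible data. The paper gives no proof: it quotes the statement from~\cite{marty2025partialII}, where it is deduced from the classification of Theorem B there.

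\textbf{Main gap.} The substance sits in the bijection you grant in Step 3, and the half the `if' direction needs is the half your plan never addresses. Two things in your plan together give only the `only if' direction:
\begin{itemize}
\item what you prove: an isotopy along the flow cannot cross $\Rec_\alpha$, so $h_S$ is an isotopy invariant;
\item what you flag as the main obstacle: building a section $S_h$ for each admissible $h$.
\end{itemize}
For ``$G_{\phi,\alpha}$ finite and transitive $\Rightarrow$ finitely many sections'' you need the other half. Two partial cross-sections cohomologous to $\alpha$ with the same heights up to translation must be isotopic along the flow, i.e.\ $h_S$ must be a \emph{complete} invariant. That is a Fried-type uniqueness statement for sections that avoid $\Rec_\alpha$, and it is the real content of the criterion. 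Citing Theorem B is legitimate, since the paper does nothing more. But then use its actual statement: your reconstruction is not even well posed in general, as (iii) below shows.

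\textbf{Repairs to the counting.} Even granting the classification, three points need fixing.

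(i) Sign. With your convention, $h_S(R)$ is the largest $k$ with $\widehat R_k$ in the past side, so increasing $k$ moves toward the future. Take an orbit asymptotic to $\widehat R_{1,a}$ in the past and to $\widehat R_{2,a+c}$ in the future. Forward invariance of the future side then gives the upper bound $h_S(R_2)\le h_S(R_1)+c$, not a lower bound. So when no edge goes from $B$ to $A$, you must lower the heights on $B$ rather than raise them. Also, $c$ depends on the connecting orbit, and the binding constant is its minimum over all connecting orbits.

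(ii) Infinite graph. The bullet on infinite $G_{\phi,\alpha}$ is not an argument. In an infinite strongly connected graph every pair of vertices still has two-sided bounds, so there is no free shift. For Anosov flows the case is vacuous. Two $\alpha$-null periodic orbits passing close to each other are joined by heteroclinic orbits that are limits of $\alpha$-null periodic orbits. Hence each chain is open in $\Rec_\alpha$, and there are finitely many, exactly as in Section~\ref{sec-proof-A}.

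(iii) Lifts. A lift of $S$ need not have just a past and a future side. Take $\alpha=2\beta$ and $S$ two parallel copies of a partial cross-section in class $\beta$. Each lifted component already separates the cyclic cover, so removing a lift of $S$ leaves three components. Define heights instead through the crossing-count function on the complement of the full preimage of $S$. It shifts by $\alpha(\tau)$ under a deck transformation $\tau$.
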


\section{Proof of Theorem~\ref{thm-cross-section}}\label{sec-proof-A}

The proof of Theorem~\ref{thm-cross-section} is a direct consequence of the classification of partial cross-sections in the general case and of the shadowing lemma.

\begin{proof}[Proof of the first part in Theorem~\ref{thm-cross-section}]
	When $\phi$ is Anosov and $\epsilon>0$ is small enough, any periodic $\epsilon$-pseudo-orbit is shadowed by a real periodic orbit, and in particular, it is homologous to that periodic orbit. It follows that $\cone(D_{\phi,\epsilon})=\cone(D_{\phi})$ when $\epsilon$ is small enough. The conclusion follows from Theorem~\ref{thm-ps-existence}.
\end{proof}

When the flow is Anosov and transitive, from standard arguments, the hypothesis in Theorem~\ref{thm-ps-finiteness} is verified. 

\begin{proof}[Proof of the second part in Theorem~\ref{thm-cross-section}]
	We now assume that $\phi$ is transitive. 
	Take two periodic orbits $\gamma_1,\gamma_2$ with $\alpha(\gamma_i)=0$, and that pass close enough to each other. The stable leaf of $\gamma_1$ intersects the unstable leaf of $\gamma_2$, so $\gamma_1$ and $\gamma_2$ belong to the same $\alpha$-recurrence chain. This implies that each $\alpha$-recurrence chain is open in the $\alpha$-recurrence set. So by compactness, $G_{\phi,\alpha}$ is finite.

	Since $\phi$ is transitive, there exists a periodic orbit $\delta$ that passes very close to every $\alpha$-recurrence chain.
	Given two $\alpha$-recurrence chains $R_1,R_2$, one can truncate $\delta$ into an $\epsilon$-pseudo-orbit that spends an infinite amount of time in $R_1$ (in the past), then jumps to $\delta$, follows $\delta$, jumps to $R_2$ and remains inside $R_2$ for an infinite amount of time. Using the shadowing lemma, there is a unique orbit $\gamma$ of $\phi$ that shadows that pseudo-orbit. Then $\gamma$ converges toward $R_1$ in the past, so it belongs to its unstable lamination. Similarly, it belongs to the stable lamination of $R_2$. It follows that $G_{\phi,\alpha}$ is transitive. Then according to Theorem~\ref{thm-ps-finiteness}, there are at most finitely many partial cross-sections up to isotopy.
\end{proof}

\section{Homologically full Anosov flows}\label{sec-hull-homol}

We prove the following.
Denote by $X_\phi$ the subset of $H_1(M,\ZZ)$ consisting of the homology classes of the periodic orbits of $\phi$. The flow $\phi$ is said to be \emph{homologically full} if $X_\phi$ is equal to $H_1(M,\ZZ)$. We say that $\phi$ is \emph{homologically full modulo torsion} if the image of $X_\phi$ in $\bfrac{H_1(M,\ZZ)}{\tor}$ is the whole module.

From~\cite[Lemma 2.3]{marty2025partialI} and from the periodic shadowing lemma, there exist $\epsilon>0$ and finitely many periodic orbits $\delta_1,\ldots,\delta_n$, so that any periodic $\epsilon$-pseudo-orbit of $\phi$ is homologous to a sum (with non-negative weights) of the $[\delta_i]$. It follows:

\begin{lemma}
	For any Anosov flow $\phi$ (not necessarily transitive), $\cone(D_\phi)$ is a polyhedral cone spanned by finitely many points that have integer coefficients. 
\end{lemma}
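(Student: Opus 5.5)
The plan is to show directly that $\cone(D_\phi)$ equals the cone $C$ spanned by the finitely many integer classes $[\delta_1],\ldots,[\delta_n]$ (taken in $H_1(M,\RR)$) provided by the fact quoted just before the statement. Such a cone is polyhedral and generated by integer points, so this is exactly the claim. We work with the real homology classes throughout, so any torsion in $H_1(M,\ZZ)$ disappears.

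For the inclusion $\cone(D_\phi)\subset C$, we fix $\epsilon>0$ small enough that every periodic $\epsilon$-pseudo-orbit $\gamma$ satisfies $[\gamma]=\sum_i a_i[\delta_i]$ with $a_i\geq 0$. Then $\frac{1}{\len(\gamma)}[\gamma]$ lies in $C$. A finitely generated cone is convex and closed, so $D_{\phi,\epsilon}$, being the closed convex hull of such points, is contained in $C$. Since $D_\phi\subset D_{\phi,\epsilon}$, we get $\cone(D_\phi)\subset C$.

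For the reverse inclusion, it suffices to show that each $[\delta_i]$ lies in $\cone(D_\phi)$. Each $\delta_i$ is a genuine periodic orbit, hence a periodic $\epsilon'$-pseudo-orbit for every $\epsilon'>0$. So $\frac{1}{\len(\delta_i)}[\delta_i]\in D_{\phi,\epsilon'}$ for all $\epsilon'$, and therefore it lies in the decreasing intersection $D_\phi$. Thus $[\delta_i]$, a positive multiple of this point, lies in $\cone(D_\phi)$. Since $\cone(D_\phi)$ is closed under non-negative combinations, $C\subset\cone(D_\phi)$.

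The only delicate point is the input fact from~\cite[Lemma 2.3]{marty2025partialI} combined with periodic shadowing. That fact is assumed here, so the argument is essentially formal. One should also check that $\cone(D_\phi)$ means the set of non-negative combinations of points of $D_\phi$. This is automatic because $D_\phi$ is convex, and it avoids any closure issues since $C$ is already closed.
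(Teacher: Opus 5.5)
Your proposal is correct and follows the paper's approach: the paper deduces the lemma directly from the cited fact (via \cite[Lemma 2.3]{marty2025partialI} and periodic shadowing) that every periodic $\epsilon$-pseudo-orbit is a non-negative combination of finitely many $[\delta_i]$. Your two inclusions, including the observation that each $\delta_i$ is itself a pseudo-orbit for every $\epsilon'$ and so contributes to $D_\phi$, are exactly the details the paper leaves implicit.
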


It is also known to be a consequence of the existence of a Markov partition for transitive Anosov flows.

\begin{lemma}
	A transitive Anosov flow is homologically full if and only if it is homologically full modulo torsion, if and only if $\cone(D_\phi)=H_1(M,\RR)$ holds, if and only if $\conv(D_\phi)$ contains zero in its interior.
\end{lemma}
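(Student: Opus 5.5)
We prove the cycle of implications
\[
\text{hom. full} \Rightarrow \text{hom. full mod torsion} \Rightarrow \cone(D_\phi)=H_1(M,\RR) \Rightarrow 0\in\mathrm{int}\,\conv(D_\phi) \Rightarrow \text{hom. full}.
\]
The first implication is trivial.

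For the second, suppose the image of $X_\phi$ in $\bfrac{H_1(M,\ZZ)}{\tor}$ is everything. Every periodic orbit $\gamma$ satisfies $\frac{1}{\len(\gamma)}[\gamma]\in D_\phi$, up to the torsion part, which vanishes in real homology. So $\cone(D_\phi)$ contains the image of the lattice $\bfrac{H_1(M,\ZZ)}{\tor}$. That lattice spans $H_1(M,\RR)$, and it contains both $v$ and $-v$ for each basis vector $v$. Hence $\cone(D_\phi)=H_1(M,\RR)$.

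For the third, suppose $\cone(D_\phi)=H_1(M,\RR)$ and, for contradiction, that $0$ is not in the interior of the compact convex set $\conv(D_\phi)$. Then some nonzero linear form $\alpha$ satisfies $\alpha\geq 0$ on $\conv(D_\phi)$. This uses a supporting hyperplane if $0$ lies in $\conv(D_\phi)$, and strict separation otherwise; the degenerate case where $\conv(D_\phi)$ lies in a proper subspace is handled the same way. It follows that $\alpha\geq0$ on the whole cone. This contradicts the fact that the cone contains the vector $-w$ for any $w$ with $\alpha(w)>0$.

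The last implication, $0\in\mathrm{int}\,\conv(D_\phi)\Rightarrow$ homologically full, is the real content, and we use transitivity here. Fix a class $h\in H_1(M,\ZZ)$. The plan is to build a periodic $\epsilon$-pseudo-orbit with homology class exactly $h$, then shadow it by a true periodic orbit, which has the same class when $\epsilon$ is small.

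\emph{Ingredients.} Since $0$ is interior and, by the preceding lemma, $\cone(D_\phi)$ is spanned by the classes of finitely many periodic orbits $\delta_1,\ldots,\delta_n$, these classes positively span $H_1(M,\RR)$. Hence there are positive integers $a_i$ with $\sum a_i[\delta_i]=0$ in $H_1(M,\RR)$, and every element of $H_1(M,\ZZ)$ is, modulo torsion, a non-negative integer combination of the $[\delta_i]$ plus a bounded correction.

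\emph{Construction.} By transitivity, choose a dense orbit. Cutting it, we obtain for each pair of points a connecting orbit segment, and hence ``transition'' pseudo-orbit segments between the $\delta_i$ of bounded length. Assemble a periodic pseudo-orbit that winds $m_i$ times around each $\delta_i$ and uses the fixed transitions. Its homology class is $\sum m_i[\delta_i]+c$, where $c$ is a fixed class contributed by the transitions.

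\emph{Main obstacle.} We need $h-c$ to be reachable as a non-negative combination $\sum m_i[\delta_i]$ exactly in $H_1(M,\ZZ)$, including its torsion part. To handle the free part, we add large multiples $N\sum a_i[\delta_i]$ to make all coefficients non-negative. For the torsion part, we note that $N\sum a_i[\delta_i]$ is a torsion class, so choosing $N$ as a multiple of its order kills it. The remaining torsion discrepancy is the delicate point. We resolve it by a semigroup argument: the set of classes of periodic orbits is closed under ``concatenation up to a fixed transition class''. Using closed loops in the transitive flow that realize each torsion generator, namely pseudo-orbits following a dense orbit segment returning near its start, together with the shadowing lemma, the semigroup generated is in fact a group containing all torsion classes. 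Once this subgroup is shown to be all of $H_1(M,\ZZ)$, shadowing finishes the proof.
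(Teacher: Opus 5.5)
Your first three implications are fine; the paper also treats them as clear. The last implication has a genuine gap, at exactly the step you call delicate. Two problems reduce to one unproved claim, namely that the classes of periodic orbits generate $H_1(M,\ZZ)$ as a group:
\begin{itemize}
\item the ``bounded correction'' in the free part, since the $[\delta_i]$ may only span a finite-index sublattice of $H_1(M,\ZZ)$ modulo torsion;
\item the torsion discrepancy.
\end{itemize}
You write ``once this subgroup is shown to be all of $H_1(M,\ZZ)$'', but you never show it. The mechanism you suggest does not do it either. Closing up a dense orbit segment that returns near its start produces \emph{some} class, and nothing makes it a prescribed torsion generator.

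The paper gets this input by citing Adachi's theorem that periodic orbits of a transitive Anosov flow generate $H_1(M,\ZZ)$. It then enlarges the family $\gamma_i$ so that $\sum\ZZ[\gamma_i]=H_1(M,\ZZ)$. A positive integer relation $\sum\lambda_i[\gamma_i]=0$, which you also have after killing torsion, gives $\sum\NN[\gamma_i]=\sum\ZZ[\gamma_i]$. Finally, the paper absorbs the transition cost with a single periodic orbit $\delta$ passing near every $\gamma_i$, writing $c-[\delta]$ as a non-negative combination.

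If you want a self-contained substitute for Adachi, a covering argument works.
\begin{itemize}
\item Let $G$ be the subgroup generated by periodic orbit classes. These classes span $H_1(M,\RR)$, so $H_1(M,\ZZ)/G$ is finite.
\item Lift $\phi$ to the connected finite regular cover $\hat M$ with deck group $H_1(M,\ZZ)/G$. Periodic orbits lift to periodic orbits, possibly with multiplied period, so periodic points are dense in $\hat M$. By the spectral decomposition, the lifted Anosov flow is therefore transitive.
\item Take a nontrivial deck transformation $\tau$ and an orbit segment in $\hat M$ running from near $\hat x$ to near $\tau\hat x$. It projects to a closed $\epsilon$-pseudo-orbit in $M$ whose class maps to $\tau\neq 0$ in $H_1(M,\ZZ)/G$.
\item Shadowing turns this pseudo-orbit into a periodic orbit whose class lies outside $G$, a contradiction.
\end{itemize}
Hence $G=H_1(M,\ZZ)$. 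With that in hand, your semigroup argument and your fixed-transition class $c$ (itself a periodic orbit class by shadowing, hence in $G$) complete the proof.
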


The direct implications clearly hold, and the last two statements are clearly equivalent.

\begin{proof}
	Assume that $\cone(D_\phi)=H_1(M,\RR)$ holds.
	Then there exist finitely many periodic orbits $\gamma_1,\ldots,\gamma_n$ such that the convex hull of the $[\gamma_i]$ contains zero in its interior.
	From~\cite{adachi1987closed}, $H_1(M,\ZZ)$ is spanned by the homology classes of $\phi$. So up to adding some $\gamma_i$, we may assume that $\sum_i\ZZ[\gamma_i]=H_1(M,\ZZ)$ holds.
	
	From above, there exist some scalars $\lambda_i>0$ that satisfy $\sum_i\lambda_i[\gamma_i]=0$. Since the $[\gamma_i]$ have integer coefficients, the $\lambda_i$ can be taken with rational coefficients, and even with integer coefficients.
	Using the above relation, we get that $-\lambda_i[\gamma_i]$ belongs to $\sum_i\NN[\gamma_i]$, which implies $\sum_i\NN[\gamma_i]=\sum_i\ZZ[\gamma_i]$.

	Take a periodic orbit $\delta$ of $\phi$ that passes close enough to each $\gamma_i$. For any $c$ in $H_1(M,\ZZ)$, $c-[\delta]$ can be written as a sum $\sum_ia_i[\gamma_i]$ for some $a_i\geq 0$. It implies $c=[\delta]+\sum_ia_i[\gamma_i]$. One can construct a pseudo-orbit that follows $\delta$ once and that follows each $\gamma_i$ $a_i$ times. Then there exists a periodic orbit $\gamma$ that shadows it. It follows that $c$ is homologous to $\gamma$. Hence $\phi$ is homologically full.
\end{proof}

\begin{proof}[Proof of Theorem~\ref{thm-hom-full-section}]
	Note that when $\phi$ is transitive, any partial cross-section is necessarily non-null-cohomologous.

	Assume first that $\phi$ admits a non-null-cohomologous partial cross-section $S$. Then for any periodic orbit $\gamma$ of $\phi$, we have $S\algcap\gamma\geq 0$. From~\cite[Lemma 3.6]{marty2025partialI} and from the periodic shadowing lemma, we have $S\algcap D_\phi\geq0$. So $D_\phi$ lies in a half space of $H_1(M,\RR)$. In particular, $\phi$ is not homologically full.
	
	Assume now that $\phi$ is not homologically full. 
	It follows from above that $\conv(D_\phi)$ is contained in a half space (including the boundary of the half space). So there exists $\alpha$ in $H^1(M,\RR)$, non-zero, that satisfies $\alpha(D_\phi)\geq0$. From above, $\cone(D_\phi)$ is described by equations with integer coefficients, so $\alpha$ may be taken in $H^1(M,\ZZ)$. 
	From~\cite[Theorem A]{marty2025partialII},~\cite[Lemma 3.6]{marty2025partialI} and the shadowing lemma, there exists a partial cross-section of $\phi$ cohomologous to $\alpha$.
\end{proof}

\begin{proof}[Proof of Theorem~\ref{thm-hyp-implies-hom-full}]
	Note that in dimension 3, any partial cross-section is necessarily a torus, since it is foliated by the transverse intersection of the stable/unstable foliations. From~\cite{brunella1993separating}, transverse tori of Anosov flows are incompressible, and from~\cite[Theorem 9]{preissmann1942quelques}, hyperbolic manifolds have no incompressible tori. Thus an Anosov flow on a hyperbolic manifold is transitive (non-transitive Anosov flows admit transverse tori~\cite{brunella1993separating}) and has no partial cross-section. It follows from Theorem~\ref{thm-hom-full-section} that it is homologically full.
\end{proof}

\section{An example of non-transitive Anosov flows}\label{sec-example}

We give an example of a non-transitive Anosov flow which is homologically full.
Let $\Sigma$ be a hyperbolic surface, $M=T^1\Sigma$ be its unit tangent bundle, $\phi$ be the geodesic flow on $M$, and $\gamma$ be a periodic orbit of $\phi$. The flow $\phi$ is well-known to be homologically full (see~\cite[Lemma 5]{marty2025skewed}).

\begin{lemma}
	The map $H_1(M\setminus\gamma,\ZZ)\to H_1(M,\ZZ)$ is an isomorphism if and only if there exists a closed surface $S$ transverse to $\gamma$, and with $|S\cap\gamma|=1$.
\end{lemma}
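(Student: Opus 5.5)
We prove the lemma by comparing $M$ with $M\setminus\gamma$ through a tubular neighbourhood of $\gamma$. Let $N\simeq \gamma\times D^2$ be a closed tubular neighbourhood of $\gamma$, so that $M\setminus\gamma$ deformation retracts onto $M\setminus \mathrm{int}(N)$. The Mayer--Vietoris sequence for $M=(M\setminus\gamma)\cup N$, with intersection homotopy equivalent to the torus $\partial N$, reads
$$H_2(M)\xrightarrow{\partial} H_1(\partial N)\to H_1(M\setminus\gamma)\oplus H_1(N)\to H_1(M)\to \tilde H_0(\partial N)=0 .$$
Write $\mu$ for the meridian of $\partial N$ and $\lambda$ for a longitude. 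Then $H_1(\partial N)=\ZZ\mu\oplus\ZZ\lambda$, and $H_1(N)=\ZZ[\gamma]$, generated by the image of $\lambda$ with $\mu\mapsto 0$. Eliminating the $\lambda$-summand (it maps isomorphically onto $H_1(N)$), the sequence reduces to the exact sequence
$$H_2(M,\ZZ)\xrightarrow{\ \cdot\,\gamma\ } \ZZ\mu\to H_1(M\setminus\gamma,\ZZ)\to H_1(M,\ZZ)\to 0 .$$
Here the first map sends a class $[S]$ to $(S\algcap\gamma)\,\mu$: a surface meeting $\gamma$ transversely, once cut open along $\partial N$, has boundary equal to $(S\algcap\gamma)$ meridians. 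This identification of the connecting map with the algebraic intersection number is the one step to check carefully, but it is standard: represent $[S]$ by a surface transverse to $\gamma$ and split it along $\partial N$.

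From the reduced sequence, $H_1(M\setminus\gamma)\to H_1(M)$ is always surjective. It is injective if and only if $\mu$ lies in the image of $\partial$, that is, if and only if some class in $H_2(M,\ZZ)$ has intersection number $\pm1$ with $\gamma$. Equivalently, $[\gamma]$ is primitive of infinite order in $H_1(M,\ZZ)$ modulo torsion, in the sense that some $\beta\in H^1(M,\ZZ)\cong H_2(M,\ZZ)$ satisfies $\beta(\gamma)=1$.

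It remains to relate this homological condition to the geometric one in the statement.

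\textbf{The surface implies the isomorphism.} If $S$ is closed, transverse to $\gamma$, and $|S\cap\gamma|=1$, then $S\algcap\gamma=\pm1$. Hence $\partial[S]=\pm\mu$, and the map is injective.

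\textbf{The isomorphism implies the surface.} Conversely, take a class $\beta\in H^1(M,\ZZ)$ with $\beta(\gamma)=1$. Represent it by a smooth map $f\colon M\to \RR/\ZZ$, and homotope $f$ so that its restriction to $\gamma\cong\RR/\ZZ$ is a degree-one diffeomorphism. This can be done by a homotopy supported near $\gamma$, since the restriction $f|_\gamma$ has degree $\beta(\gamma)=1$ and is therefore homotopic to a diffeomorphism, and the homotopy extends to $M$ using the tubular neighbourhood. Now pick a regular value $\theta$ of $f$. The fibre $S=f^{-1}(\theta)$ is a closed oriented surface (more precisely, a closed hypersurface, which is a surface here since $\dim M=3$). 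It meets $\gamma$ exactly once, and transversely, because $f|_\gamma$ is a diffeomorphism.

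The only mildly delicate points are two: the sign and identification of the connecting homomorphism, and the extension of the homotopy of $f|_\gamma$ to $M$, which follows from the homotopy extension property. Neither is a real obstacle.
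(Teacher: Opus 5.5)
Your proof is correct. Its exact-sequence half is the paper's argument in another form. The paper uses the long exact sequence of the pair $(M,M\setminus\gamma)$: $H_1(M,M\setminus\gamma,\ZZ)=0$ gives surjectivity, and the image of $H_2(M,M\setminus\gamma,\ZZ)$ is generated by the meridian. That is exactly your reduced Mayer--Vietoris sequence.

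The routes diverge at the last step. The paper passes directly from ``the meridian is null-homologous in $M\setminus\gamma$'' to the surface. Puncturing $S$ at $S\cap\gamma$ gives one direction. The other direction tacitly uses that a null-homologous curve in a $3$-manifold bounds an embedded oriented surface, which is then capped with a meridian disk. You instead identify the connecting map $H_2(M,\ZZ)\to\ZZ\mu$ with intersection with $\gamma$. This reduces the question to the criterion $\beta(\gamma)=1$ for some $\beta\in H^1(M,\ZZ)$. You then realize $S$ as a regular fibre of a circle-valued map, first made a diffeomorphism on $\gamma$, which forces a single transverse intersection point rather than merely algebraic intersection one.

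Your route costs the identification of $\partial$ and Poincar\'e duality. In return it makes the converse explicit and yields an easily checked intermediate criterion (primitivity of $[\gamma]$ modulo torsion). The paper's version is shorter but leaves the embedded-surface step to the reader.

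Both arguments tacitly need $S$ to be orientable; you use this through $S\algcap\gamma$. The assumption is necessary: $\RR P^2\subset\RR P^3$ meets a projective line $L$ in one transverse point, yet $H_1(\RR P^3\setminus L,\ZZ)\cong\ZZ\to\ZZ/2\cong H_1(\RR P^3,\ZZ)$ is not injective.
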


\begin{proof}
	The long exact sequence in homology contains
	$$\cdots\to H_2(M,M\setminus\gamma,\ZZ)\xrightarrow{f} H_1(M\setminus\gamma,\ZZ)\xrightarrow{g} H_1(M,\ZZ)\xrightarrow{h} H_1(M,M\setminus\gamma,\ZZ)=0.$$
	It follows from $\im(g)=\ker(h)=H_1(M,\ZZ)$ that $g$ is surjective. We have $\ker(g)=\im(f)$, so $g$ is injective if and only if any meridian of $\gamma$ is null-homologous in $H_1(M\setminus\gamma,\ZZ)$, that is, if and only if there exists a closed surface $S$ transverse to $\gamma$, and with $|S\cap\gamma|=1$.
\end{proof}

Let $\gamma$ be the geodesic induced by a simple oriented closed geodesic on $\Sigma$. Given any geodesic $\delta$ that intersects (transversally) $\gamma$ only once, the torus $\TT_\delta$, consisting of unit vectors based at $\delta$, intersects $\gamma$ exactly once. So from above, $H_1(M\setminus\gamma,\ZZ)=H_1(M,\ZZ)$. It follows that the restriction of $\phi$ to $M\setminus\gamma$ is homologically full. 

\begin{theorem}\label{thm-example-non-t}
	Let $\phi_1,\phi_2$ be the derived Anosov flows of $\phi$ on $\gamma$, so that $\gamma$ is attracting for $\phi_1$ and repelling for $\phi_2$. Drill a transverse neighborhood of $\gamma$ in $\phi_1$ and~$\phi_2$. Then any Anosov gluing of these two flows is non-transitive and homologically full.
\end{theorem}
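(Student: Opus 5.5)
Write $N$ for the glued manifold and $\psi$ for the resulting Anosov flow. Let $M_1,M_2$ be the two drilled pieces, viewed as subsets of $N$, with common boundary torus $T$. We prove non-transitivity and homological fullness separately.

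\emph{Non-transitivity.} This is the easy part. In $\phi_1$ the orbit $\gamma$ is attracting, so after drilling, the boundary torus of $M_1$ is exited by the flow. In $\phi_2$ the orbit $\gamma$ is repelling, so the boundary torus of $M_2$ is entered. After gluing, $T$ is a transverse torus that every orbit crosses at most once, from $M_1$ into $M_2$. In particular $T$ is a null-cohomologous partial cross-section: it separates $N$, so intersection with it is zero on every closed curve. A transitive flow cannot have one, since a dense orbit would have to cross $T$ in both directions. This is the remark after Theorem~\ref{thm-cross-section}, or Smale's theory. So $\psi$ is non-transitive.

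\emph{Homological fullness.} The plan is to produce periodic orbits of $\psi$ inside $M_1$ (and $M_2$) representing every class of $H_1(N,\ZZ)$.
\begin{enumerate}
\item The flows are unchanged away from $\gamma$. Hence every periodic orbit of $\phi$ disjoint from $\gamma$ is still a periodic orbit of $\phi_1$ lying in $M_1$, hence of $\psi$. Conversely, the periodic orbits in the interior of $M_1$ correspond to the periodic orbits of $\phi$ in $M\setminus\gamma$.
\item Homological fullness of $\phi$ restricted to $M\setminus\gamma$, in the sense of $H_1(M\setminus\gamma,\ZZ)$, was noted above. I would justify it by rerunning the lemma of Section~\ref{sec-hull-homol} (that homological fullness is equivalent to $\cone(D)$ being the whole space) for the basic set of $\phi$ on $M\setminus\gamma$. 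The needed inputs are:
\begin{itemize}
\item the geodesic flow is homologically full, together with the isomorphism $H_1(M\setminus\gamma)\cong H_1(M)$ from the preceding lemma, which gives spanning of the cone;
\item shadowing with orbits staying away from $\gamma$, which is possible because that invariant set is a transitive hyperbolic set.
\end{itemize}
So every class of $H_1(M_1,\ZZ)\cong H_1(M\setminus\gamma,\ZZ)$ is represented by a periodic orbit of $\psi$ in $M_1$.
\item It remains to show that $H_1(M_1,\ZZ)\to H_1(N,\ZZ)$ is surjective. I would use Mayer--Vietoris for $N=M_1\cup_T M_2$:
$$H_1(T)\to H_1(M_1)\oplus H_1(M_2)\to H_1(N)\to \tilde H_0(T)=0.$$
Every class of $N$ is then $a_1+a_2$ with $a_i\in H_1(M_i)$. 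By the preceding lemma, the surface $S=\TT_\delta$ meets $\gamma$ once, so each $H_1(M_i)\cong H_1(M)$ is generated by classes of curves in $M\setminus\gamma$. I still need to show that the image of $H_1(M_2)$ lies in the image of $H_1(M_1)$, or else build periodic orbits representing $a_1+a_2$ directly.
\end{enumerate}

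The last point is the \emph{main obstacle}: no closed orbit crosses $T$, so a closed orbit cannot visit both sides. One attempt is to show that the images of $H_1(M_1)$ and $H_1(M_2)$ in $H_1(N)$ coincide. For this, take the punctured torus $\TT_\delta\setminus D$ in each $M_i$ with boundary a meridian. The meridian of $\gamma$ is null-homologous in each $M_i$, but it is glued to some curve on $T$ depending on the gluing. So the Mayer--Vietoris image of $H_1(T)$ kills the pairs (longitude$_1$, $-$glued longitude$_2$) together with all of $H_1(T)$'s image.

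If this identification fails, I would instead handle the remaining classes by the sum decomposition $a_1+a_2$. To do so, I would first check whether both $H_1(M_i)\to H_1(N)$ are surjective after all. Their images together generate, and each contains the image of $H_1(T)$. I would test this carefully on the explicit gluing maps, since the claim of the theorem for any gluing strongly suggests that each side alone already surjects, via the torus $\TT_\delta$ argument.
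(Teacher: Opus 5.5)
\textbf{There is a genuine gap, and it cannot be filled: the statement as written is false.} Your non-transitivity argument is correct, and so is your observation that no periodic orbit of $\psi$ meets $T$. The gap is Step~3. The Mayer--Vietoris sequence you wrote down already rules out the surjectivity you are hoping for.

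Both pieces are $T^1\Sigma$ minus an open solid torus around $\gamma$. By the lemma preceding the theorem, $H_1(M_i,\ZZ)\cong H_1(T^1\Sigma,\ZZ)\cong\ZZ^{2g}\oplus\ZZ/(2g-2)$ with $g\geq 2$. On each side the meridian is null-homologous and a longitude maps to $[\gamma]$. So, whatever the gluing map, the image of $H_1(T,\ZZ)\cong\ZZ^2$ in $H_1(M_1,\ZZ)\oplus H_1(M_2,\ZZ)$ lies in $\ZZ[\gamma]_1\oplus\ZZ[\gamma]_2$. Hence $H_1(N,\ZZ)$ has rank at least $4g-2$, while the image of each $H_1(M_i,\ZZ)$ has rank at most $2g<4g-2$. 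Your guess that each side alone surjects is therefore false.

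Combine this with your remark that every periodic orbit lies in $M_1$ or in $M_2$. The classes of periodic orbits then lie in the union of two proper linear subspaces of $H_1(N,\RR)$. That union cannot contain the image of $H_1(N,\ZZ)$, since a group is never the union of two proper subgroups. Concretely, take each $a_i\in H_1(M_i,\ZZ)$ not a rational multiple of $[\gamma]_i$; the image in $H_1(N,\ZZ)$ of $(a_1,a_2)$ is represented by no periodic orbit. So $\psi$ is not homologically full, not even modulo torsion, and no proof can close your Step~3. The paper itself only asserts that the theorem ``easily follows'' from B\'eguin--Bonatti--Yu. That reference yields the Anosov gluing and its non-transitivity, but nothing that overcomes this homological obstruction.

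\textbf{What your argument does establish.} Once Step~2 is granted, the two images span $H_1(N,\RR)$, so $\cone(D_\psi)=H_1(N,\RR)$. By Theorem~\ref{thm-cross-section}, $\psi$ then has no non-null-cohomologous partial cross-section, yet it is not homologically full. That is a genuine non-transitive phenomenon, but it is the opposite of the claimed one. With two pieces glued along one torus, homological fullness forces $H_1(T)\to H_1(M_j)$ to be onto for some $j$. So a correct example would glue one drilled copy of $T^1\Sigma$ to a piece whose boundary torus carries all of its first homology, for instance a drilled Anosov flow on a homology sphere.

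\textbf{A smaller point in Step~1.} The DA modification is supported in a neighborhood of $\gamma$. Periodic orbits of $\phi$ disjoint from $\gamma$ may still cross that neighborhood, so they are not literally orbits of $\phi_1$. The correspondence should go through the semi-conjugacy between $\phi_1$ on its basic set and $\phi$ (homotopic to the identity), together with $H_1(M_1,\ZZ)\cong H_1(M,\ZZ)$.
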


The theorem easily follows from~\cite{beguin2017building}.

\appendix

\section{Alternative proof of Theorem~\ref{thm-Sharp}}

To an invariant probability measure $\mu$ of $\phi$ corresponds a unique homology class $[\mu]_\phi$ such that for any closed 1-form $\beta$ on $M$, we have $\beta([\mu]_\phi)=\int_M\beta(\frac{\partial\phi_t}{\partial t})d\mu$. We say that $\mu$ is \emph{null-homologous} if $[\mu]_\phi=0$.

\begin{lemma}
	Let $\phi$ be a transitive Anosov flow and $\mu$ be a $\phi$-invariant probability measure. If $\mu$ is totally supported, then $[\mu]_\phi$ belongs to the interior of $\cone(D_\phi)$.
\end{lemma}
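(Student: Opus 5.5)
We argue by duality, using that $\cone(D_\phi)$ is a closed polyhedral cone with rational faces (the lemma in Section~\ref{sec-hull-homol}). First, $[\mu]_\phi$ lies in $\cone(D_\phi)$: ergodic decomposition and the ergodic theorem show that the asymptotic direction of $\mu$-generic points lies in $D_\phi$, or alternatively that invariant measures are weak-$*$ limits of measures on periodic orbits. The latter holds for Anosov flows by the closing lemma, with averages over orbits normalised by length.

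A point of a closed convex cone $C$ in a finite-dimensional space lies in the interior of $C$ exactly when every non-zero linear form $\alpha$ with $\alpha(C)\geq 0$ satisfies $\alpha>0$ at that point. If $C$ has empty interior, there is a non-zero $\alpha$ with $\alpha(C)=0$, so this case is covered too. Since $C$ is polyhedral with integer generators, it suffices to test the finitely many supporting forms of its facets. These forms are rational and can be scaled into $H^1(M,\ZZ)$. So we fix a non-zero $\alpha\in H^1(M,\ZZ)$ with $\alpha(D_\phi)\geq 0$, and we must show $\alpha([\mu]_\phi)>0$.

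By Theorem~\ref{thm-cross-section}, first part, there is a partial cross-section $S$ with $[S]=\alpha$. Choose a closed $1$-form $\beta$ representing $\alpha$ that is supported in a thin collar $S\times[0,1]$ of $S$, transverse to the flow, of the form $\beta=\rho'(t)\,dt$ with $\rho$ a bump going from $0$ to $1$. Since the flow crosses the collar positively, $\beta(\partial_t\phi)\geq 0$ everywhere, with strict inequality on a non-empty open set $U$ inside the collar. Therefore
$$\alpha([\mu]_\phi)=\int_M\beta\Bigl(\frac{\partial\phi_t}{\partial t}\Bigr)\,d\mu\geq\int_U\beta\Bigl(\frac{\partial\phi_t}{\partial t}\Bigr)\,d\mu>0,$$
because $\mu$ is totally supported and hence $\mu(U)>0$. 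Since $\phi$ is transitive, $S$ is not null-cohomologous, so $\alpha\neq 0$, consistent with the choice of $\alpha$.

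The main obstacle is the construction of $\beta$. We need a closed $1$-form representing $[S]$ whose values along the flow are non-negative everywhere and positive on an open set. The collar construction requires the orientation of $S$ to agree with the co-orientation given by the flow, and $S$ to be $\Class^1$ and compact. Both hold by definition of a partial cross-section. A smaller point is checking that $\cone(D_\phi)$ has non-empty interior or handling the case where it does not; the dual characterisation above covers both cases.
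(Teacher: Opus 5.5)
Your proposal is correct and follows the paper's proof essentially step for step. You take an integral supporting form $\alpha$ of the rational polyhedral cone, get a partial cross-section $S$ in the class $\alpha$ from Theorem~\ref{thm-cross-section}, and build a closed $1$-form supported in a thin flow box around $S$ that is non-negative on the generator and positive near $S$, whose $\mu$-integral is then positive by full support. The only difference is cosmetic: you argue directly through the dual description of the interior of a closed convex cone, while the paper argues by contradiction from a boundary point. Your version makes the preliminary membership $[\mu]_\phi\in\cone(D_\phi)$ logically unnecessary, since a non-member would already be separated by such a form.
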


\begin{proof}
	From~\cite[Appendix]{marty2025partialI}, $[\mu]_\phi$ lies inside $\cone(D_\phi)$. To prove that it lies in the interior of the cone, we reason by contradiction. So assume that it lies on the boundary of $\cone(D_\phi)$. Then there exists a non-zero class $\alpha$ in $H^1(M,\RR)$ that satisfies $\alpha(D_\phi)\geq 0$ and $\alpha([\mu]_\phi)=0$. Since $\cone(D_\phi)$ is polyhedral and described by equations with integer coefficients, we may choose $\alpha$ in $H^1(M,\ZZ)$. 
	
	From Theorem~\ref{thm-cross-section}, $\phi$ admits a partial cross-section $S$ cohomologous to $\alpha$. Take $\epsilon>0$ small so that the $2\epsilon$-flow box around $S$ is embedded. Denote by $h\colon\RR\to\RR$ a smooth function that is non-decreasing, with $h'>0$ on $]-\epsilon,\epsilon[$, and that satisfies $h(]-\infty,-\epsilon])=0$ and $h([\epsilon,+\infty[)=1$. Also define the function $f\colon\phi_{[-\epsilon,\epsilon]}(S)\to[0,1]$ by $f\circ\phi_t(x)=h(t)$ for any $x\in S$ and $t\in[-\epsilon,\epsilon]$. Note that $df$ is a closed 1-form on $\phi_{[-\epsilon,\epsilon]}(S)$, and that it satisfies $df\equiv0$ on $\partial\phi_{[-\epsilon,\epsilon]}(S)$.

	We can extend $df$ into a closed 1-form $\beta$ on $M$, by $\beta\equiv0$ outside $\phi_{[-\epsilon,\epsilon]}(S)$. By construction, $\beta$ is closed, it satisfies $\beta(\frac{\partial\phi_t}{\partial t})\geq 0$, and the inequality is strict on a neighborhood of $S$. It follows 
	$$\beta([\mu]_\phi)=\int_M\beta\left(\frac{\partial\phi_t}{\partial t}\right)d\mu\geq\int_{\phi_{]-\epsilon,\epsilon[}(S)}\beta\left(\frac{\partial\phi_t}{\partial t}\right)d\mu >0,$$
	which contradicts the assumption.
\end{proof}

\begin{proof}[Proof of Theorem~\ref{thm-Sharp}]
	Assume that $\phi$ is homologically full. Take $\mu$ to be any totally supported invariant probability measure. Then one can find periodic orbits $\gamma_i$ and $a_i>0$ with $\sum_ia_i=1$ so that $\sum_ia_i[\gamma_i]=-[\mu]_\phi$ holds. Then the renormalization of the sum of $\mu$ and of the Lebesgue measures on the $\gamma_i$, weighted according to the $a_i$, is null-homologous. So there exists a null-homologous totally supported $\phi$-invariant probability measure.

	Assume now that $\phi$ is not homologically full. Then from Theorem~\ref{thm-hom-full-section}, $\cone(D_\phi)$ does not contain zero in its interior. So from above, no totally supported invariant probability measure is null-homologous.
\end{proof}

Note that $\phi$ may have a null-homologous invariant probability measure (or a null-homologous periodic orbit), but its support must be disjoint from any partial cross-section.

\bibliographystyle{alpha}
\bibliography{ref}

\end{document}